\newcommand {\SC} {{\mathbb C}}
\newcommand {\SK} {{\mathbb K}}
\newcommand {\SN} {{\mathbb N}}
\newcommand {\SR} {{\mathbb R}}
\newcommand {\ST} {{\mathbb T}}
\newcommand {\SX} {{\mathbb X}}
\newcommand {\SZ} {{\mathbb Z}}
\let\oldphi=\phi
\renewcommand {\phi} {{\varphi}}
\newcommand {\tphi} {{\tilde\varphi}}
\newcommand {\al} {{\alpha}}
\newcommand {\dt} {{\delta}}
\newcommand {\Dt} {{\Delta}}
\newcommand {\e} {{\varepsilon}}
\newcommand {\bfe} {{\boldsymbol \e}}
\newcommand {\la} {{\lambda}}
\newcommand{\Ga}{{\Gamma}}
\newcommand {\phin} {{\varphi_{i_{n+1}}}}
\newcommand {\tw} {\widetilde{w}}
\newcommand {\bone} {{\bf 1}}
\newcommand {\cD} {{\mathcal D}}
\newcommand {\cI} {{\mathcal I}}
\newcommand {\cT} {{\mathcal T}}
\newcommand {\tG} {{\widetilde G}}
\newcommand {\tdt} {{\widetilde\dt}}
\newcommand {\hg} {{\hat g}}
\newcommand {\bPhi} {{\bar\Phi}}
\newcommand {\Ds} {\displaystyle}
\newcommand {\Ts} {\textstyle}
\newcommand{\Log}{{\operatorname{Log }}}
\def\supp{\mathop{\rm supp}}
\def\dist{\mathop{\rm dist}}
\def\lsgn{\mathop{\overline{\mbox{\rm sign}}}}
\newcommand {\Scirc} {\raise.2ex\hbox{$\scriptstyle\circ$}}
\newcommand {\mand} {{\quad\mbox{and}\quad}}
\renewcommand {\mid} {{\,\,\,\colon\,\,\,}}
\renewcommand {\mid} {{\,\,\,\colon\,\,\,}}
\newcommand{\sline}{{\smallskip

\noindent}}
\newcommand{\Ba}[1]{\begin{array}{#1}}
\newcommand{\Ea}{\end{array}}
\newcommand{\Be}{\begin{equation}}
\newcommand{\Ee}{\end{equation}}
\newcommand{\Bea}{\begin{eqnarray}}
\newcommand{\Eea}{\end{eqnarray}}
\newcommand{\Beas}{\begin{eqnarray*}}
\newcommand{\Eeas}{\end{eqnarray*}}
\newcommand{\Benu}{\begin{enumerate}}
\newcommand{\Eenu}{\end{enumerate}}
\newcommand{\Bi}{\begin{itemize}}
\newcommand{\Ei}{\end{itemize}}
\newcommand{\BR}{\begin{Remark} \em}
\newcommand{\ER}{\end{Remark}}
\newcommand{\BE}{\begin{example} \em}
\newcommand{\EE}{\end{example}}
\newcounter{remark}
\newtheorem{theorem}[equation]{T{\hskip 0pt\footnotesize\bf HEOREM}}
\newtheorem{proposition}[equation]{P{\hskip 0pt\footnotesize\bf ROPOSITION}}
\newtheorem{corollary}[equation]{C{\hskip 0pt\footnotesize\bf OROLLARY}}
\newtheorem{lemma}[equation]{L{\hskip 0pt\footnotesize\bf EMMA}}
\newtheorem{Remark}[equation]{R{\hskip 0pt\footnotesize\bf EMARK}}
\newtheorem{definition}[equation]{D{\hskip 0pt\footnotesize\bf EFINITION}}
\newtheorem{example}[equation]{E{\hskip 0pt\footnotesize\bf XAMPLE}}
\newcommand {\ProofEnd} {
             \begin{flushright} \vskip -0.2in $\Box$ \end{flushright}}
\newcommand {\Au}{{\texttt{A2}}}
\newcommand {\At}{{\texttt{A3}}}
\newcommand {\D}{{\texttt{D}}}
\def\bbone{{\mathbbm 1}}
\newcommand {\A} {{\mathscr{A}}}
\newcommand {\G} {{\mathscr{G}}}
\DeclareSymbolFont{fouriersymbols}{FMS}{futm}{m}{n}
\DeclareSymbolFont{fourierlargesymbols}{FMX}{futm}{m}{n}
\DeclareMathDelimiter{\VERT}{\mathord}{fouriersymbols}{152}{fourierlargesymbols}{147}
\def \<{\langle}
\def\>{\rangle}
\def \sign{\operatorname{sign}}
\definecolor{ggcol}{cmyk}{.74, 0, 1, .41} 
\begin{document}

\title{The WCGA in $L^p(\log L)^\al$ spaces }

\author{G. Garrig\'os}
\address{Gustavo Garrig\'os
\\
Departamento de Matem\'aticas
\\
Universidad de Murcia
\\
30100 Murcia, Spain} \email{gustavo.garrigos@um.es}

\subjclass[2010]{41A46, 41A25, 41A65, 46B15, 46B20, 46E30.}

\keywords{Non-linear approximation, greedy algorithm, 
uniformly smooth Banach space, Orlicz space, Haar system, trigonometric system.}

\begin{abstract}
We present some new results concerning Lebesgue-type inequalities for the Weak Chebyshev Greedy Algorithm (WCGA)
in uniformly smooth Banach spaces $\SX$.
First, we generalize Temlyakov's theorem \cite{Tem14} to cover situations in which the modulus of smoothness and the $\At$ parameter
are not necessarily power functions. Secondly, we apply this new theorem to the Zygmund spaces $\SX=L^p(\log L)^\al$,
with $1<p<\infty$ and $\al\in\SR$, and show that, when the Haar system is used, then optimal recovery of $N$-sparse signals
occurs when the number of iterations is $\oldphi(N)=O(N^{\max\{1,2/p'\}}
\,(\log N)^{|\al| p'})$. Moreover, this quantity is sharp when $p\leq 2$.
Finally, an expression for $\oldphi(N)$ in the case of the trigonometric system is also given.
\end{abstract}

\maketitle

\section{Introduction}
\setcounter{equation}{0}

In this paper we consider several theoretical aspects regarding $N$-term approximation in a Banach space $(\SX,\|\cdot\|)$,
over a field $\SK=\SR$ or $\SC$. 

A fundamental question in this topic is, given a dictionary $\cD=\{\phi_i\}_{i\in\cI}$ in $\SX$, and the corresponding
set of $N$-sparse vectors
\[
\Sigma_N=\Sigma_N(\cD):=\Big\{\sum_{j=1}^Nc_j\phi_{i_j}\mid c_j\in\SK, \;\phi_{i_j}\in\cD\Big\},
\] 
then find constructive procedures
(algorithms) $\A_N:\SX\to \Sigma_N$, where for all $f\in\SX$ the quantity $\|f-\A_N(f)\|$ is as close as possible to the \emph{best 
error of $N$-term approximation}, defined by
\[
\sigma_N(f,\cD):=\dist(f,\Sigma_N)=\inf\Big\{\|f-g\|\mid g\in \Sigma_N(\cD)\Big\}.
\]
Once an algorithm $\A_N$ is fixed, one can quantify the above statement by considering the associated \emph{Lebesgue-type inequality},
which amounts to find the smallest value of $\oldphi(N)$ so that
\Be
\|f-\A_{\oldphi(N)}(f)\|\,\leq\,C\,\sigma_N(f),\quad \forall\;f\in\SX,
\label{AN}
\Ee
with $C$ a fixed universal constant (if it exists). Observe, in particular, that \eqref{AN} guarantees optimal recovery 
of all $N$-sparse signals after $\oldphi(N)$ iterations, that is 
\[
\A_{\oldphi(N)}(f)=f,\quad\forall \;f\in\Sigma_N(\cD).
\]
Ideally, one would like to find algorithms $\A_N$ so that \eqref{AN} holds with $\oldphi(N)=N$ (and $C=1$).
But this is hardly possible in many situations (a notable exception being when $\cD$ is an orthonormal basis in a Hilbert space).
For instance, in the classical case when $\cD$ is the trigonometric system in $L^p(\ST)$, $p\not=2$,
it is still a relevant open question to find one such (constructive) algorithm.

\

In this paper we shall be interested in the \emph{Weak Chebyshev Greedy Algorithm} (WCGA), which
was introduced by Temlyakov in \cite{Tem01} as a  generalization to Banach spaces of the
celebrated \emph{Orthogonal Matching Pursuit} (OMP) from Hilbert spaces. 
We refer to \cite{Tem11, Tem15, Tem18}, and references therein, for background on this topic.

\

Lebesgue-type inequalities for the WCGA were proved in \cite{LivTem14,Tem14}; see also \cite[Chapter 8]{Tem18} for a historical overview.
One the features of WCGA is that it has good approximation properties for the trigonometric system in $L^p$.
Indeed, it was shown in \cite[(4.3)]{Tem14} that, if $p>2$, then Lebesgue inequalities hold with only $\oldphi(N)=O(N\log N)$ iterations.
This seems to be the best known result with a constructive algorithm in that setting.
Likewise, for the univariate Haar system in $L^p$, if $1<p\leq2$, then it suffices with $\oldphi(N)=O(N)$ iterations; see \cite[(4.7)]{Tem14}.

\

The above results are special cases a deep theorem proved by Temlyakov in \cite[Theorem 2.8]{Tem14},
which we describe in detail below. In that theorem, the number of iterations $\oldphi(N)$ is estimated in terms 
of some intrinsic properties of the pair $(\SX,\cD)$, namely, 
the power type of the modulus of smoothness of $(\SX,\|\cdot\|)$, and the power function associated with the so-called 
property $\At$ of $\cD$; see \eqref{phiTem} below.

\

Our main result in this paper, Theorem \ref{th_newG}, will be a generalization of Temlyakov's theorem,
which allows to cover situations in which the modulus of smoothness and the $\At$ parameters
are not necessarily power functions. This is actually needed in some special cases, such as when $\SX=L^p(\log L)^\al$,
for which additional log factors appear naturally. Our next results, Theorems \ref{th_Lpal} and \ref{th3}, will 
be applications of Theorem \ref{th_newG} to this setting, for two special dictionaries, the Haar and the trigonometric system.

\

We next give a more detailed description of these results.

\subsection{Statements of results}

We assume that $(\SX,\|\cdot\|)$ is a uniformly smooth Banach space, meaning that 
its modulus of smoothness
\Be
\rho_\SX(t):=\sup_{\|f\|=\|g\|=1} \,\tfrac12\,\Big(\|f+tg\|+\|f-tg\|-2\|f\|\Big),\quad t\in\SR.
\label{rho}
\Ee
satisfies $\rho_\SX(t)=o(t)$. Given $f\in\SX$ with $f\not=0$, we let $F_f\in \SX^*$ be the associated norming functional, that is,
the (unique) element in $\SX^*$ such that
\Be
\|F_f\|_{\SX^*}=1,\mand F_f(f)=\|f\|.
\label{F_f}
\Ee
Uniqueness follows from the smoothness of the norm $\|\cdot\|$.

We say that $\cD=\{\phi_i\}_{i\in\cI}$ is a \emph{dictionary} in $\SX$, if it consists of non-null vectors whose 
closed linear span is $\SX$, that is
\[
\big[\phi_i\big]_{i\in\cI}=\SX.
\]
We do not assume the dictionary elements to be normalized, although 
as a consequence of later properties $\cD$ will be \emph{semi-normalized}, that is
\[
\frak{c_0}\leq \|\phi_i\|\leq \frak{c_1}, \quad \forall\,\phi_i\in\cD,
\]
for some constants $\frak{c_1}\geq \frak{c_0}>0$; see \S\ref{R_sn} below.

\begin{definition}{\bf Weak Chebyshev Greedy Algorithm (WCGA).}\label{wcga}
Given a fixed $\tau\in(0,1]$, a $\tau$-WCGA associated with $(\SX,\|\cdot\|,\cD)$ is 
any collection of mappings 
\[
\G_N:\SX\to\Sigma_N(\cD),\quad N=1,2,\ldots
\]
with the following properties: 

\sline  Given $f\in \SX\setminus\{0\}$, we let $f_0:=f$ and define inductively vectors $\phi_{i_1},\ldots, \phi_{i_n}$ in $\cD$ and $f_1,\ldots, f_n\in\SX$ by the following procedure:
at step $n+1$ we pick any  $\phin\in\cD$ such that
\Be
|F_{f_n}(\phin)|\geq \,\tau\,\sup_{\phi\in\cD}|F_{f_n}(\phi)|,
\label{FfphiD}
\Ee
and let $\G_{n+1}(f)$ be any element in $[\phi_{i_1},\ldots,\phi_{i_{n+1}}]$ such that
\Be
\|f-\G_{n+1}(f)\|=\dist\big(f,[\phi_{i_1},\ldots,\phi_{i_{n+1}}]\big).
\label{cheby}
\Ee
Then we set $f_{n+1}=f-\G_{n+1}(f)$, and iterate the process (indefinitely, or until the remainder $f_{n+1}=0$). 

\noindent If at some stage we have $f_n=0$, then we just let $\G_{n+k}(f)=\G_n(f)=f$ for all $k\geq1$.
\end{definition}

\BR Note that such algorithms can always be constructed when $\tau<1$, and for some dictionaries also when $\tau=1$ (namely, 
when the sup in \eqref{FfphiD} is attained within $\cD$).
\ER

We next define the three key properties that are needed to prove Lebesgue-type inequalities for WCGA. 
The first one is a generalization of a property given in \cite[Definition 1.13]{DGHKT21}.

\begin{definition}
Let $Q(t)$ be a positive increasing function for $t\in(0,\infty)$, with $Q(0)=0$.
We say that $(\SX,\|\cdot\|,\cD)$ satisfies {\bf property $\D(Q)$} if
\Be
\dist(f,[\phi])\leq \|f\|\,\Big(1-Q(|F_f(\phi)|)\Big),\quad \forall\,\phi\in\cD,\,f\in\SX\setminus\{0\}.
\label{DQ}
\Ee
\end{definition}

The next definition coincides with property $\Au$ from \cite{LivTem14, Tem14}.

\begin{definition}
Let $N<D$ be positive integers and $k_N>0$.
We say that $\Sigma_N(\cD)\in\Au(k_N, D)$ if
\Be
\label{kND}
\|\sum_{j\in A}a_j\phi_j\|\leq k_N\,\|\sum_{j\in B}a_j\phi_j\|, \quad \forall\,a_j\in\SK, \;\forall\,A\subset B\mid |A|\leq N,\;|B|< D
\Ee
If the above holds for all $D<\infty$, we just write $\Sigma_N(\cD)\in\Au(k_N)$.
\end{definition}

Our third definition is a slight generalization of  property $\At$ from \cite{Tem14}.

\begin{definition}
Let $N<D$ be positive integers and let $\{H(k)\}_{k=1}^\infty$ be an increasing sequence of positive numbers. 
We say
 $\Sigma_N(\cD)\in\At(H,D)$ if
\Be
\label{AtH}
\sum_{j\in A}|a_j|\leq H(|A|)\,\|\sum_{j\in B}a_j\phi_j\|, \quad \forall\,a_j\in\SK, \;\forall\,A\subset B\mid |A|\leq N,\;|B|< D.
\Ee
If the above holds for all $D<\infty$, we just write $\Sigma_N(\cD)\in\At(H)$.
\end{definition}

\

Finally, we recall that a positive sequence  $\{G(k)\}_{k=1}^\infty$ is called \emph{1-quasi-convex} 
if
\Be
\frac{G(k)}{k}\,\leq \,
\frac{G(k+1)}{k+1},\quad \forall\, k\in\SN.
\label{qc}
\Ee 
As an example, if  $G(t)$ is a positive convex function in $(0,\infty)$ with $G(0^+)=0$, 
then $\{G(k)\}_{k=1}^\infty$ is 1-quasi-convex.
This is the case, for instance, for the functions 
\Be
G(t)=t^p\,\big(\log(c+t)\big)^\al,
\label{Gqc}
\Ee
if $p=1$ and $\al\geq0$, 
or if $p>1$ and $\al\in\SR$ (for a sufficiently large $c\geq e$).

\

The precise statement of our main result is now the following.

\begin{theorem}\label{th_newG}
Let $(\SX,\|\cdot\|)$ be a Banach space, $\cD$ a dictionary, $\tau\in(0,1]$ and $\G_n:\SX\to\Sigma_n$ a  $\tau$-WCGA.
Let $D>N\geq1$ be fixed. Let $k_N>0$ and let $Q(t)$, $H(n)$ be positive and increasing  
functions such that the following properties hold
\Benu
\item[(i)] $(\SX,\|\cdot\|,\cD)$ satisfies $\D(Q)$

\item[(ii)] $\Sigma_N$ satisfies property $\Au(k_N,D)$.

\item[(iii)]  $\Sigma_N$ satisfies property $\At(H,D)$.
\Eenu
Let $\la_1>1$. Assume further that the sequence
\Be
G(n)= \Big[Q\Big(\frac{c(\tau)}{H(n)}\Big)\Big]^{-1},  \quad \mbox{with}\quad 
c(\tau)\,=\,\tfrac\tau2(1-\tfrac1{\sqrt{\la_1}})\,.
\label{Gn}
\Ee
is 1-quasi-convex. If we let
\Be
\oldphi(N)\,:=\,8\,\ln\Big[\frac{8(1+\la_1)k_N}{\sqrt{\la_1}-1}\Big]\,G(2N),
\label{phiHG}
\Ee
 then it holds
\Be
\Big\|x-\G_{\oldphi(N)}(x)\Big\|\leq \la_1\,\|x-\Phi\|,\quad \forall\,x\in\SX, \;\Phi\in\Sigma_N,
\label{xNG}
\Ee
provided that $N+\oldphi(N)< D$.
\end{theorem}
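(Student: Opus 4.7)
The plan is to adapt Temlyakov's argument from \cite[Theorem 2.8]{Tem14} to the generalized hypotheses $\D(Q)$ and $\At(H,D)$, using the 1-quasi-convexity of $\{G(k)\}$ to telescope the per-step contractions. Fix $x\in\SX\setminus\{0\}$ and $\Phi=\sum_{j\in T}a_j\phi_j\in\Sigma_N$ with $|T|\leq N$, and set $f_n=x-\G_n(x)$, $J_n=\{i_1,\ldots,i_n\}$ and $T'_n=T\setminus J_n$. A standard consequence of the Chebyshev step \eqref{cheby} together with the smoothness of $\SX$ is that the norming functional $F_{f_n}$ annihilates $[\phi_{i_1},\ldots,\phi_{i_n}]$; this yields the two identities $F_{f_n}(x)=\|f_n\|$ and $F_{f_n}(\Phi)=\sum_{j\in T'_n}a_jF_{f_n}(\phi_j)$ that drive the argument.

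I would proceed by contradiction, assuming $\|f_n\|>\sqrt{\la_1}\|x-\Phi\|$ for every $0\leq n<\oldphi(N)$; this is enough, since $\{\|f_n\|\}$ is non-increasing by \eqref{cheby} and $\sqrt{\la_1}\leq\la_1$. The identities above then give $|F_{f_n}(\Phi)|\geq\|f_n\|(1-1/\sqrt{\la_1})$. Applying $\At(H,D)$ with $A=T'_n$ and $B=T'_n\cup J_n$ (legitimate because $|A|\leq N$ and $|B|\leq N+n<D$ by the hypothesis $N+\oldphi(N)<D$), together with $\|\Phi-\G_n(x)\|\leq\|x-\Phi\|+\|f_n\|<2\|f_n\|$, yields $\sum_{j\in T'_n}|a_j|\leq 2H(|T'_n|)\|f_n\|$. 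Combining these two estimates with the weak greedy rule \eqref{FfphiD} produces the key per-step bound
\[
|F_{f_n}(\phi_{i_{n+1}})|\,\geq\,\tau\sup_{\phi\in\cD}|F_{f_n}(\phi)|\,\geq\,\tau\,\frac{|F_{f_n}(\Phi)|}{\sum_{j\in T'_n}|a_j|}\,\geq\,\frac{c(\tau)}{H(|T'_n|)},
\]
with $c(\tau)$ as in \eqref{Gn}. Together with $\D(Q)$ and the Chebyshev step this promotes to the contraction
\[
\|f_{n+1}\|\,\leq\,\|f_n\|\bigl(1-Q\bigl(c(\tau)/H(|T'_n|)\bigr)\bigr)\,=\,\|f_n\|\bigl(1-1/G(|T'_n|)\bigr).
\]

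Telescoping this inequality over $n=0,\ldots,\oldphi(N)-1$, and using $|T'_n|\leq N$ together with the 1-quasi-convexity of $\{G(k)\}$ (which, via the monotonicity of $G(k)/k$, yields $G(N)\leq G(2N)/2$), I obtain
\[
\|f_{\oldphi(N)}\|\,\leq\,\|x\|\,\exp\bigl(-2\,\oldphi(N)/G(2N)\bigr).
\]
The final step is to close the contradiction by controlling $\|x\|$ in terms of $k_N\|x-\Phi\|$. Here $\Au(k_N,D)$ enters: applying it with the same $A,B$ as above produces $\|\Phi_{T'_n}\|\leq k_N\|\Phi-\G_n(x)\|\leq 2k_N\|f_n\|$, and combining with the lower bound $|F_{f_n}(\Phi)|\leq\|\Phi_{T'_n}\|$ one derives, under the contradictory hypothesis, an estimate of the form $\|x\|\leq C(\la_1)\,k_N\,\|x-\Phi\|$. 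The prefactor $8\ln\bigl[8(1+\la_1)k_N/(\sqrt{\la_1}-1)\bigr]$ in \eqref{phiHG} is precisely what is needed to make the exponential decay strictly smaller than $\sqrt{\la_1}\|x-\Phi\|$, contradicting the assumption and hence forcing $\|f_{\oldphi(N)}\|\leq\la_1\|x-\Phi\|$.

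The main technical obstacle is this final step: the delicate use of $\Au(k_N,D)$ to control $\|x\|/\|x-\Phi\|$ under the contradictory hypothesis (which produces the $\ln(k_N)$ factor in \eqref{phiHG}), together with the telescoping via 1-quasi-convexity that gives the factor $G(2N)$ rather than $G(N)$. In the power-type setting of \cite{Tem14} both ingredients are implicit in the constants, so the present generality requires some additional bookkeeping.
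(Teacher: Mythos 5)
Your per-step contraction is derived correctly: under the hypothesis $\|f_n\|>\sqrt{\la_1}\|x-\Phi\|$, the chain of inequalities $|F_{f_n}(\Phi)|\geq\|f_n\|(1-1/\sqrt{\la_1})$, then $\At(H,D)$ applied to $A=T'_n$, $B=\supp(\Phi-\G_n(x))$, then the weak greedy rule, then $\D(Q)$, does yield $\|f_{n+1}\|\leq\|f_n\|\bigl(1-1/G(|T'_n|)\bigr)$. This is essentially the content of the paper's Theorem~\ref{th_iterG} specialized to $B=\emptyset$ (i.e., $A=T_k$). But the telescoping step that follows is where the argument breaks down, and the breakdown is fatal.

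Telescoping with $|T'_n|\leq N$ gives $\|f_{\oldphi(N)}\|\leq\|x\|\exp(-\oldphi(N)/G(N))$, a bound in terms of $\|x\|$, not $\|x-\Phi\|$. To close the contradiction you would need $\oldphi(N)\gtrsim G(N)\ln\bigl(\|x\|/\|x-\Phi\|\bigr)$, and the ratio $\|x\|/\|x-\Phi\|$ is unbounded over $x\in\SX$, $\Phi\in\Sigma_N$. Your claimed remedy — that $\Au(k_N,D)$ yields $\|x\|\leq C(\la_1)k_N\|x-\Phi\|$ — is not justified, and in fact no such bound holds: $\Au$ compares $\|\Phi_{T'_n}\|$ with $\|\Phi-\G_n(x)\|$, but this tells you nothing about $\|x\|$ versus $\|x-\Phi\|$ (for instance, one may take $\Phi=x$ with $\|x\|$ arbitrarily large). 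The $\ln k_N$ factor in \eqref{phiHG} does \emph{not} arise from a bound of that form.

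The paper's proof avoids this obstruction through two ideas that are absent from your proposal. First, the iteration theorem (\ref{th_iterG}) is not a pure contraction: it is an affine recursion of the form $\|f_{m+M}\|\leq e^{-M/G(|A|)}\|f_m\|+\la(\|f-\Phi\|+\|\Phi_B\|)$ with a free choice of a subset $A\subset T_k$ and complement $B$; taking $A$ \emph{small} keeps $G(|A|)$ small (cheap contraction), at the price of a nonzero additive term $\la\|\Phi_B\|$. Second, Theorem~\ref{th2G} chooses a dyadic scale $L$ and sets $A_j$ of size $\leq 2^{j-1}$ so that either the additive term is already $\lesssim\|x-\Phi\|$, or — via the ``Key Fact'' \eqref{keyG} together with $\Au(k_N,D)$ in \eqref{key2G} — the algorithm is forced to capture at least $2^{L-2}$ indices of $T$. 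This capture mechanism, iterated over a bounded number $s$ of rounds in Step~4 (where the quasi-convexity of $G$ is used nontrivially via Lemmas~\ref{L1} and \ref{L2}, not merely through $G(N)\leq G(2N)/2$), is what makes the total iteration count independent of $\|x\|$. A direct by-contradiction telescoping of a single contraction cannot reproduce this, so the gap in your proof is structural, not a missing lemma.
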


\

We now make some comments about this theorem.


\sline a) The result obtained by Temlyakov in \cite[Theorem 2.8]{Tem14} corresponds to 
the case when $\la_1$ is a (possibly large) universal constant, and 
\[
H(N)=V_N\,N^r \mand Q(t)=c\,t^{q'}, 
\]
where $q>1$ is the power type of the modulus of smoothness, ie 
$\rho_\SX(t)=O(t^q)$. In that case, the required number of iterations becomes
\Be
\oldphi(N)\,=\, C_1\,(V_N/\tau)^{q'}\,\log(1+k_N)\,N^{rq'},
\label{phiTem}
\Ee
for some $C_1>0$, provided  that $rq'\geq1$. Our contribution gives an additional explicit form
for the constants when the parameter $\la_1$ approaches 1. 

\sline b) As we show in Proposition \ref{P_dt} below, if $\cD$ is normalized, then condition $\D(Q)$ always holds with 
\[
Q(t)=2\dt_{\SX^*}(t/2),
\]
where $\dt_{\SX^*}(t)$ is the \emph{modulus of convexity} of the dual space $\SX^*$. This is also a new result.
In many practical cases the asymptotic behavior of $\dt_{\SX^*}(t)$ is well-known, so one can use property $\D(Q)$ with no need
to compute $\rho_\SX(t)$. 

\sline c) As was discussed in \cite[Remark 2.10]{DGHKT21}, in some special cases it is possible to prove that 
$(\SX,\|\cdot\|,\cD)$ satisfies property
$\D(Q)$ with a function $Q(t)$ which is considerably better than $\dt_{\SX^*}(t)$ (for $t$ near 0). 
For instance, if $\SX=\ell^p$ and $\cD$ is the canonical basis,
then one can take $Q(t)=c_pt^{p'}$, which gives better results than $\dt_{\SX^*}(t)=O(t^{\max\{p',2\}})$ when $p>2$.
Other examples (with power type) were given in \cite[Proposition 4.12 and Lemma 5.7]{DGHKT21}.

\sline d) The assumption that $G(n)$ in \eqref{Gn} is 1-quasi-convex is only made for convenience. 
Alternatively, one could replace $G(n)$ by any convex majorant (hence, 1-quasi-convex). In practice, 
quasi-convexity is easily verified after substituting the functions $Q(t)$ and $H(n)$ into \eqref{Gn}; see the example in 
\eqref{Gqc}.

\sline e) As in \cite{Tem14}, the conclusion \eqref{xNG} in the previous theorem also holds when the assumptions $\Au$ and $\At$
are required \emph{only} on the individual sparse element $\Phi=\sum_{j\in T}x_j\phi_j$, with $|T|\leq N$ (and not necessarily in 
all $\Phi\in\Sigma_N$).
Namely, in this case the requierement would be that \eqref{kND} and \eqref{AtH} must hold for all $A\subset T$ and 
all scalars $a_j\in\SK$ such that $a_j=x_j$, $j\in A$.

\

Our second result is an application of Theorem \ref{th_newG} to the case when $\SX=L^p(\log L)^\al$;
see \S\ref{S_Lpal} below for the precise definition. We stress that, when $1<p\leq 2$, the number of iterations
which are derived from the above theorem, namely
\[
\oldphi(N)=O\Big(N\,\big(\log(e+N)\big)^{p'|\al|}\Big),
\]
is actually (asymptotically) optimal for all $\al\in\SR$.

\begin{theorem}\label{th_Lpal}
Let $1<p<\infty$ and $\al\in\SR$, and let $\SX=L^p(\log L)^\al$ be as in \S\ref{S_Lpal}.
Let $\cD$ be the (normalized) Haar basis in $\SX$.
Then 

\sline a) there exists a constant $C>1$ such that the WCGA satisfies
\Be
\Big\|f-\G_{\oldphi(N)}(f)\Big\|\leq \,2\,\sigma_N(f) 
,\quad \forall\,f\in\SX,\;N\in\SN,
\label{foldphiN}
\Ee
where
\Be
\label{phiLpal}
\oldphi(N)=\begin{cases}
C\,N^\frac2{p'}\,\big(\log(e+N)\big)^{2\al_+} &   \mbox{when $p>2$}\\
C\,N\,\big(\log(e+N)\big)^{p'|\al|} &   \mbox{when $1<p\leq2$}.
\end{cases}
\Ee
\sline b) if for some sequence $\psi(N)$ the WCGA satisfies
\Be
\Big\|f-\G_{\psi(N)}(f)\Big\|\leq \,C\,\sigma_N(f) 
,\quad \forall\,f\in\SX,\;N\in\SN,
\label{fpsiN}
\Ee
then necessarily $\psi(N)\geq c'\, N\,\big(\log(e+N)\big)^{|\al|p'}$, for some $c'>0$.
\end{theorem}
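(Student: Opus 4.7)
The proof splits into an upper bound (a), which is a direct application of Theorem \ref{th_newG}, and a sharpness statement (b), which requires an explicit extremal construction.

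For part (a), the plan is to identify the three auxiliary functions $Q$, $k_N$, $H$ entering Theorem \ref{th_newG} for the pair $(L^p(\log L)^\al,\,\text{normalized Haar})$, and then substitute them into \eqref{phiHG}. I would handle $\D(Q)$ via Proposition \ref{P_dt}: since the dual of $L^p(\log L)^\al$ is (isomorphic to) a Zygmund-type space of exponent $p'$ with a log-weight of order $|\al|$, its modulus of convexity is of power type $\max\{p',2\}$ corrected by a logarithmic factor, giving $Q(t)\asymp t^{\max\{p',2\}}[\log(e/t)]^{-c|\al|}$; in the regime $p>2$ a sharper $Q$ may be extracted directly, as in Remark (c) after Theorem \ref{th_newG}, bypassing $\dt_{\SX^*}$. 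Property $\Au(k_N)$ holds with $k_N$ uniformly bounded, by unconditionality of the Haar basis in Zygmund spaces. For $\At(H)$ I would combine the square-function characterization of the Zygmund norm with H\"older's inequality, obtaining $H(N)\asymp N^{1/p'}[\log(e+N)]^{c'|\al|}$. The resulting $G(n)=[Q(c/H(n))]^{-1}$ is of the form $n^s(\log n)^t$ with $s\geq 1$, hence 1-quasi-convex by \eqref{Gqc}, and substitution into \eqref{phiHG} yields \eqref{phiLpal} case by case. The asymmetric exponent $2\al_+$ in the regime $p>2$ reflects a partial cancellation between the log factors of $Q$ and $H$ when $\al<0$.

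For part (b), the plan is to construct, for each large $N$, a signal $f_N\in L^p(\log L)^\al$ with $\sigma_N(f_N,\cD)\leq 1$ but such that any $\tau$-WCGA satisfies $\|f_N-\G_M(f_N)\|\geq c_0$ for every $M<c' N[\log(e+N)]^{|\al|p'}$. The natural candidate is $f_N=\Phi_N-g_N$, where $\Phi_N$ is an $N$-sparse Haar expansion at a single deep dyadic scale and $g_N$ is a superposition of Haar atoms at a geometric progression of shallower scales. The scales of $g_N$ would be tuned so that the log-weight $(\log L)^\al$ amplifies $|F_{f_N}(\phi)|$ for every atom $\phi$ of $g_N$ above the corresponding values at atoms of $\Phi_N$; the WCGA selection rule \eqref{FfphiD} then forces every atom of $g_N$ to be chosen before any atom of $\Phi_N$. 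A counting argument based on the $\At$-type estimate in the reverse direction shows that $g_N$ must contain at least $cN[\log(e+N)]^{|\al|p'}$ atoms in order for $\sigma_N(f_N,\cD)$ to remain bounded, yielding the claimed lower bound on $\psi(N)$.

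The main obstacle is clearly part (b): it requires precise quantitative control of the dual norm on $L^p(\log L)^\al$ acting on Haar atoms at many different scales simultaneously, together with a verification that the WCGA selections genuinely respect the ordering imposed by $F_{f_N}$ for every admissible weakness parameter $\tau\in(0,1]$. In part (a) the chief technical effort is a careful bookkeeping of log exponents in $Q$ and $H$ so that their combination through $G$ produces exactly the exponents $2\al_+$ and $|\al|p'$ appearing in \eqref{phiLpal}.
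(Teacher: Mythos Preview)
Your overall strategy for part (a)---verify $\D(Q)$, $\Au(k_N)$, $\At(H)$, then plug into Theorem \ref{th_newG}---matches the paper exactly, but the parameters you sketch are off in ways that would derail the computation. First, for $p>2$ Proposition \ref{P_rhodt} already gives $\dt_{\SX^*}(s)\gtrsim s^2$ with \emph{no} logarithmic correction, so Proposition \ref{P_dt} yields $Q(s)=c_0 s^2$ directly; there is no log factor to remove and no need to invoke Remark~(c). Second, the paper obtains $H(N)\approx N^{1/p'}(\log(e+N))^{\al_+}$ (exponent $\al_+$, not $|\al|$), via the dual democracy function $h_{\SX^*}(N)$ computed in \cite{GHM08}; your square-function argument would have to reproduce this asymmetry. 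Consequently the exponent $2\al_+$ when $p>2$ comes \emph{entirely} from $H(N)^2$, with no contribution from $Q$---there is no ``partial cancellation''. The $|\al|$ exponent for $1<p\leq 2$ arises because $G(N)\approx H(N)^{p'}(\log H(N))^{p'\al_-}$ combines the $\al_+$ from $H$ with the $\al_-$ from $Q$.

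For part (b) your plan diverges from the paper's construction, and as stated I do not see that it works. The paper uses only \emph{two} scales, not a geometric progression: $f=f_1+f_2$ with $f_1=\sum_{I\in A}h_I$ over $N$ disjoint intervals of length~1 and $f_2=b\sum_{I\in B}h_I$ over $M$ disjoint intervals of length $1/M$. The decisive point is not to make one block strictly dominate the other, but to choose $b=c_1(\log(e+M))^{\al p'}$ so that $|F_f(h_I)|$ is \emph{comparable} for $I\in A$ and $I\in B$; then, by adjusting $c_1$, one can steer a valid WCGA to exhaust either block first. Comparing $\|f_1\|\approx N^{1/p}$ with $\|f_2\|\approx b M^{1/p}/(\log M)^\al$ and using the Lebesgue inequality in each direction gives the bound $\psi(N)\gtrsim N(\log N)^{|\al|p'}$, with the cases $\al\geq 0$ and $\al\leq 0$ handled by swapping the roles of $N$ and $M$. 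Your proposed mechanism---forcing the algorithm to pick all of $g_N$ first and then invoking a ``reverse $\At$'' count to show $g_N$ must be large---does not obviously produce the required tension: if $\|g_N\|$ is small (so that $\sigma_N(f_N)\leq\|g_N\|$ is small) while $\|\Phi_N\|$ is large, there is no evident reason the norming functional should favor the atoms of $g_N$.
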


\BR
We remark that, when $p>2$, it is an open question already for $L^p$ spaces (case $\al=0$) whether 
$\oldphi(N)\approx N^{2/p'}$ iterations are necessary to ensure \eqref{foldphiN}; 
see \cite[Open Problem 8.3, p. 448]{Tem18}. 
\ER

\

Finally, in \S\ref{S_trig} we give a similar application in the case that $\SX=L^p(\log L)^\al$ and 
 $\cD=\{e^{inx}\}_{n\in\SZ}$ is the trigonometric system. See Theorem \ref{th3} below for details.

\section{Preliminaries}
\setcounter{equation}{0}

\subsection{\bf About seminormalization of $\cD$}\label{R_sn}
We claim that the two properties $\D(Q)$ and $\At(H,D)$ imply that the dictionary $\cD$ must be semi-normalized.
Indeed, if $\Sigma_1$ satisfies $\At(H,D)$ then
\Be
\label{sn1}
\|\phi\|\geq 1/H(1), \quad \forall\, \phi\in\cD.
\Ee
On the other hand, $\D(Q)$ implies that $Q\big(|F_f(\phi)|\big)\leq 1$ for all $\phi\in\cD$ and $f\in\SX\setminus\{0\}$.
Setting $f=\phi$ and using $F_\phi(\phi)=\|\phi\|$, this gives 
\Be
\label{sn2}
\|\phi\|\leq Q^{-1}(1), \quad \forall\, \phi\in\cD.
\Ee
Conversely, suppose that $\cD=\{\phi_j\}$ is a dictionary satisfying any of the properties
$\D(Q)$, $\Au(k_N,D)$ or $\At(H,D)$, and let $\tphi_j={\la_j}\phi_j$ for scalars $\la_j$ such that
\[
0<\frak{c_0}\leq |\la_j|\leq \frak{c_1}, \quad \forall\,j.
\]
It is then easily seen that the new dictionary $\widetilde\cD=\{\tphi_j\}$ satisfies the corresponding properties 
with new parameters, namely
\[
\mbox{$\D\big(Q(\cdot/\frak{c_1})\big)$, \quad $\Au(k_N,D)$ \quad or \quad  $\At(H/\frak{c_0},D)$}
\]
We also remark that if $\G_N$ is $\tau$-WCGA for $\cD$, then it is also a $(\tau\frak{c_0}/\frak{c_1})$-WCGA for $\widetilde{\cD}$.

\subsection{About condition $\D(Q)$}\label{S_DQ}

%

We give a practical criterion which ensures that condition $\D(Q)$ holds. 
Let $(\SX,\|\cdot\|)$ be a Banach space with \emph{modulus of smoothness}
\[
\rho_{\SX}(t)=\sup_{\|x\|=\|y\|=1}\tfrac12\Big(\|x+ty\|+\|x-ty\|-2\|x\|\Big), \quad t\in\SR.
\]
We denote by $\dt(s)=\dt_{\SX}(s)$ its \emph{modulus of convexity}, that is
\[
\dt_{\SX}(s)=\inf_{{\|x\|=\|y\|=1}\atop{\|x-y\|=s}}\Big(\frac{\|x\|+\|y\|}2-\Big\|\frac{x+y}2\Big\|\Big),  \quad s\in[0,2].
\]
Next, we consider the following related function, introduced by  Figiel \cite{Fi76},
\Be
\label{tdt}
\tdt_{\SX^*}(s):=\sup_{t\geq0}\Big(\tfrac12st-\rho_{\SX}(t)\Big), \quad s\geq0.
\Ee
Assume for simplicity that $\SX$ is uniformly smooth, that is $\rho_\SX(t)=o(t)$ when $t\to0$ (so in particular, $\SX$ is reflexive).
Then, it is easily seen that $Q(s)=\tdt_{\SX^*}(s)$ is a convex increasing function with $Q(0)=0$.
Moreover, it is shown in \cite[Proposition 1]{Fi76} (see also \cite[Proposition 1.e.6]{LZ}) that $\tdt_{\SX^*}(s)$ is
 ``\emph{equivalent}'' to 
$\dt_{\SX^*}(s)$ (for small $s$), in the sense that
\[
\dt_{\SX^*}(s/2) \leq \tdt_{\SX^*}(s)\leq \dt_{\SX^*}(s), \quad s\in[0,2].
\]
Also, $\tdt_{\SX^*}(s)$ is the greatest convex minorant of $\dt_{\SX^*}(s)$. In particular, $\tdt_{\SX^*}=\dt_{\SX^*}$
when the later is a convex function. In many examples of Banach spaces $\SX$, the behavior of the function $\dt_{\SX^*}(s)$ 
is well-known (sometimes quite explicitly). For instance, if $\SX=L^p$, $1<p<\infty$, then
\[
\dt_{L^{p'}}(s) =c_q \,s^{q}+ o(s^q),\quad \mbox{with $q=\max\{2,p'\}$;}
\]
see \cite[p.63]{LZ}.
Our main result in this section is the following.

\begin{proposition}\label{P_dt}
If $(\SX,\|\cdot\|)$ is uniformly smooth, then every \emph{normalized} dictionary $\cD$ in $\SX$ satisfies property 
$\D(Q)$ with $Q(s)=2\tdt_{\SX^*}(s)$.
\end{proposition}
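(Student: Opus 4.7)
The plan is to reduce to unit vectors and then use the definition of $\rho_{\SX}$ together with the norming functional to produce an explicit linear combination $\mu\phi$ whose distance to $f/\|f\|$ beats the quantity $1-2\tdt_{\SX^*}(|F_f(\phi)|)$. Concretely, set $\hat f=f/\|f\|$, note that $F_{\hat f}=F_f$, and observe that by homogeneity property $\D(Q)$ with $Q=2\tdt_{\SX^*}$ is equivalent to
\[
\dist(\hat f,[\phi])\;\leq\;1-2\tdt_{\SX^*}\bigl(|F_f(\phi)|\bigr).
\]
So the question is just to bound $\inf_{\mu\in\SK}\|\hat f-\mu\phi\|$ from above.

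The key algebraic step is to rotate $\phi$ so that the pairing with $F_{\hat f}$ becomes real and positive. Writing $F_{\hat f}(\phi)=u\,e^{i\theta}$ with $u=|F_f(\phi)|\geq0$, I would set $g:=e^{-i\theta}\phi$; then $\|g\|=\|\phi\|=1$ (here normalization of $\cD$ enters) and $F_{\hat f}(g)=u$. From the definition \eqref{rho} of $\rho_\SX$ applied to the unit vectors $\hat f$ and $g$, for every $t\geq0$
\[
\|\hat f+tg\|+\|\hat f-tg\|\;\leq\;2+2\rho_\SX(t).
\]
Since $|F_{\hat f}|=1$ and $F_{\hat f}(\hat f+tg)=1+tu$ is a nonnegative real, one has $\|\hat f+tg\|\geq 1+tu$. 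Subtracting gives
\[
\|\hat f-tg\|\;\leq\;1-tu+2\rho_\SX(t)\;=\;1-2\Bigl(\tfrac{tu}{2}-\rho_\SX(t)\Bigr).
\]
Taking the infimum over $t\geq 0$ and using the Fenchel-type identity \eqref{tdt} yields
\[
\dist(\hat f,[\phi])\;\leq\;\inf_{t\geq 0}\|\hat f-tg\|\;\leq\;1-2\sup_{t\geq 0}\Bigl(\tfrac{tu}{2}-\rho_\SX(t)\Bigr)\;=\;1-2\tdt_{\SX^*}(u),
\]
which, after multiplying through by $\|f\|$, is exactly \eqref{DQ} with $Q(s)=2\tdt_{\SX^*}(s)$.

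There is no real obstacle, but two small points deserve care. First, the argument must handle the complex case, which is why the rotation $\epsilon=e^{-i\theta}$ is introduced: without it one only controls $|1+tF_{\hat f}(g)|$, which does not separate cleanly. Second, one should note that $Q(s)=2\tdt_{\SX^*}(s)$ is genuinely positive and increasing on $(0,\infty)$ with $Q(0)=0$, so that it is admissible in the definition of $\D(Q)$; this follows from the preliminary discussion recalling that $\tdt_{\SX^*}$ is the greatest convex minorant of $\dt_{\SX^*}$ and that $\SX$ uniformly smooth forces $\rho_\SX(t)=o(t)$, hence $\tdt_{\SX^*}(s)>0$ for $s>0$.
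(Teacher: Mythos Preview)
Your proof is correct and follows essentially the same route as the paper. The paper also normalizes to $\|f\|=1$, rotates by $\nu=\overline{\mathrm{sign}}\,F_f(\phi)$ (your $e^{-i\theta}$), obtains the inequality $\|f-\nu t\phi\|\leq 1-t|F_f(\phi)|+2\rho_\SX(t)$ (quoting it from \cite[Proposition~2.1]{DGHKT21} rather than deriving it from the two-term smoothness inequality as you do), and then takes the infimum over $t\geq0$ to recover $2\tdt_{\SX^*}$ via \eqref{tdt}.
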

\begin{proof}
It suffices to prove \eqref{DQ} for $f=x\in\SX$ with $\|x\|=1$. Let $F_x\in\SX^*$ be the norming functional of $\SX$, and 
given $\phi\in\cD$, let $\nu=\overline{\mbox{sign}}\,F_x(\phi)$. Then, for every $t\geq0$, using \cite[Proposition 2.1]{DGHKT21},
we have
\[
\dist(x,[\phi])\leq \|x-\nu t\phi\|\leq \|x\|-t\,|F_x(\phi)|\,+\,2\,\rho_{\SX}(t).
\]  
Taking the infimum over all $t\geq0$ we obtain
\[
\dist(x,[\phi])\leq 1-2\sup_{t\geq0}\Big(\tfrac12t\,|F_x(\phi)|\,-\,\rho_{\SX}(t)\Big)=1-2\tdt_{\SX^*}(|F_x(\phi)|).
\]
\end{proof}
\BR
If the dictionary is not normalized, but we assume that $0<\|\phi\|\leq \frak{c_1}$, for all $\phi\in\cD$, then
the previous result gives
\[
\dist(x,[\phi])\leq 1-2\tdt_{\SX^*}(|F_x(\phi/\|\phi\|)|)\leq 1-2\tdt_{\SX^*}(|F_x(\phi)|/\frak{c_1}).
\]
So property $\D(Q)$ holds with $Q(s)=2\tdt_{\SX^*}(s/\frak{c_1})$, which is also a function equivalent to $\dt_{\SX^*}(s)$.
\ER

\subsection{About condition $\At(H)$}\label{S_AtH}

In practice, it is quite common that $(\SX,\cD)$ satisfies properties $\Au$ or $\At$ with depth $D=\infty$.
Our first observation is that this implies that $\cD$ has a biorthogonal dual system.

\begin{lemma}
Let $\cD=\{\phi_j\}_{j=1}^\infty$ be a dictionary in $\SX$. Assume that one of the following properties hold
\Benu
\item[(i)] There exists $k_1>0$ such that $\Sigma_1(\cD)\in \Au(k_1;D)$, for all $D<\infty$
\item[(ii)] There exists $H(1)>0$ such that $\Sigma_1(\cD)\in \At(H;D)$, for all $D<\infty$.
\Eenu
Then, there exists $\{\phi^*_j\}_{j=1}^\infty$ in $\SX^*$ such that 
$\{\phi_j,\phi^*_j\}_{j=1}^\infty$ is a biorthogonal system, ie
\[
\phi^*_j(\phi_i)=0,\;\;\mbox{if $j\not=i$},\mand \phi^*_j(\phi_j)=1.
\]
\end{lemma}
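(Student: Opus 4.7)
The plan is to show that both hypotheses force the dictionary to be linearly independent in a quantitative way, so that the natural ``coordinate'' functional $\phi_j \mapsto 1$, $\phi_i \mapsto 0$ ($i\neq j$) is well-defined and bounded on the algebraic span of $\cD$, and then to extend it by density.

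First, I would observe that both hypotheses yield a single-coordinate bound of the same shape. Fix $j\in\SN$ and any finite $B\ni j$; choose $D>|B|$. Applying \eqref{kND} with $A=\{j\}$ in case (i), or \eqref{AtH} with $A=\{j\}$ in case (ii), we obtain respectively
\[
\|\phi_j\|\,|a_j|\,\leq\, k_1\,\Big\|\sum_{i\in B}a_i\phi_i\Big\|\quad\text{or}\quad |a_j|\,\leq\, H(1)\,\Big\|\sum_{i\in B}a_i\phi_i\Big\|,
\]
for every choice of scalars $\{a_i\}_{i\in B}$. Since $\phi_j\neq0$ by the definition of a dictionary, in both cases we get a uniform constant $C_j>0$ (independent of $B$) such that $|a_j|\leq C_j\|\sum_{i\in B}a_i\phi_i\|$.

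From this bound, linear independence of $\cD$ is immediate: if $\sum_{i\in B}a_i\phi_i=0$ then $|a_j|\leq 0$ for every $j\in B$, forcing $a_j=0$. Thus, on the algebraic span
\[
Y\,:=\,\Span\{\phi_i\mid i\in\SN\},
\]
the coordinate map
\[
L_j\Big(\sum_i a_i\phi_i\Big)\,:=\,a_j
\]
is a well-defined linear functional, and the same bound shows it is bounded: $|L_j(g)|\leq C_j\|g\|$ for all $g\in Y$.

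Finally, since $\cD$ is a dictionary we have $\overline{Y}=\SX$, so $L_j$ admits a unique bounded linear extension $\phi_j^*\in\SX^*$ of the same norm. By construction $\phi_j^*(\phi_i)=L_j(\phi_i)=\delta_{ij}$, so $\{\phi_j,\phi_j^*\}_{j=1}^\infty$ is the desired biorthogonal system. The only mild point to be careful about is that the constant $k_1$ (resp. $H(1)$) must be independent of the depth $D$, which is exactly the quantitative strengthening encoded in the hypothesis ``for all $D<\infty$''; absent this uniformity, the extension step could fail. No further obstacle arises.
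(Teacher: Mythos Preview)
Your proof is correct. Both hypotheses indeed give the single-coordinate bound $|a_j|\le C_j\,\|\sum_{i\in B}a_i\phi_i\|$ uniformly over all finite $B\ni j$, and from there the construction of $\phi_j^*$ by extension from the dense subspace $Y=\Span\cD$ is clean and unproblematic.

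Your route differs from the paper's. The paper does not build the functionals directly; instead it massages each hypothesis into one of the abstract characterizations of minimality in Singer's book \cite[Theorem 6.1, p.~54]{Sin1}: under (i) it verifies a partial-sum bound $\|\sum_{j=1}^n a_j\phi_j\|\le n\,k_1\,\|\sum_{i=1}^{n+m}a_i\phi_i\|$ (criterion $8^{\rm o}$), and under (ii) a weighted-$\ell^1$ lower bound $\sum_{j=1}^n 2^{-j}|a_j|/H(1)\le\|\sum_{i=1}^n a_i\phi_i\|$ (criterion $4^{\rm o}$), then quotes the equivalence with the existence of a biorthogonal system. Your argument is more self-contained---it bypasses the external reference and goes straight to the heart of why those criteria work, namely that each coordinate functional is bounded on the span. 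The paper's approach, on the other hand, situates the result within a known framework and requires no explicit density/extension argument. Either way the content is the same: uniformity of the constant in $D$ is exactly what makes the coordinate functionals bounded rather than merely defined.
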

\begin{proof}
This is a consequence of \cite[Theorem 6.1, page 54]{Sin1}. Indeed, if (i) holds then
\[
\big\|\sum_{j=1}^na_j\phi_j\big\|\leq \sum_{j=1}^n\|a_j\phi_j\|\leq n\,k_1\,\big\|\sum_{i=1}^{n+m}a_i\phi_i\big\|,
\]
which implies biorthogonality by \cite[Theorem 6.1, ``$8^{\rm o}\Rightarrow2^{\rm o}$'']{Sin1}.
Similarly, if (ii) holds then
\[
\sum_{j=1}^n\frac{|a_j|}{2^j\,H(1)}\,\leq\,
\sum_{j=1}^n2^{-j}\,\big\|\sum_{i=1}^{n}a_i\phi_i\big\|\,\leq\, \big\|\sum_{i=1}^{n}a_i\phi_i\big\|,
\]
which implies biorthogonality by \cite[Theorem 6.1, ``$4^{\rm o}\Rightarrow2^{\rm o}$'']{Sin1}.
\end{proof}

So under this situation, the dictionary $\cD$ generates a dual  system $\cD^*$.
Then, a variation of \cite[Lemma 2.17]{DGHKT21} gives the following.

\begin{lemma}\label{L_A3H}
Let $\cD=\{\phi_j\}_{j=1}^\infty$ be a dictionary, with dual system $\cD^*=\{\phi^*_j\}_{j\geq1}$.
Then, $\Sigma_N\in\At(H,D)$, for all $N<D<\infty$, if we choose
\Be
\label{Hn}
H(n)=\sup_{|A|\leq n, |\e_j|=1}\,\big\|\sum_{j\in A} \e_j\phi^*_j\big\|_{\SX^*}.
\Ee
\end{lemma}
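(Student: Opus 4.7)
The plan is to test the vector $\sum_{i\in B}a_i\phi_i$ against a carefully chosen functional drawn from the supremum that defines $H(|A|)$, and then exploit biorthogonality between $\cD$ and $\cD^*$ to read off $\sum_{j\in A}|a_j|$ as the resulting duality pairing.

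More precisely, fix $A\subset B$ with $|A|\leq N$ and $|B|<D$, and for each $j\in A$ with $a_j\neq 0$, set $\e_j=\overline{a_j}/|a_j|$ (so that $|\e_j|=1$ and $\e_j a_j=|a_j|$); for $a_j=0$ pick any $\e_j$ on the unit circle. Define
\[
\Psi_A\,:=\,\sum_{j\in A}\e_j\phi^*_j\,\in\,\SX^*,
\]
which by the very definition \eqref{Hn} satisfies $\|\Psi_A\|_{\SX^*}\leq H(|A|)$. Using the biorthogonality relations $\phi^*_j(\phi_i)=\delta_{ij}$, and the fact that $A\subset B$, I would compute
\[
\Psi_A\Big(\sum_{i\in B}a_i\phi_i\Big)\,=\,\sum_{j\in A}\sum_{i\in B}\e_j a_i\,\phi^*_j(\phi_i)\,=\,\sum_{j\in A}\e_j a_j\,=\,\sum_{j\in A}|a_j|.
\]
The inequality \eqref{AtH} then follows immediately from $|\Psi_A(v)|\leq \|\Psi_A\|_{\SX^*}\|v\|$ applied to $v=\sum_{i\in B}a_i\phi_i$, combined with $\|\Psi_A\|_{\SX^*}\leq H(|A|)$.

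There is essentially no genuine obstacle here: the required functional is literally the one that witnesses $H(|A|)$ (up to the supremum), and the inclusion $A\subset B$ guarantees that the cross-terms vanish under the biorthogonal pairing. The only mild care needed is in the complex case, where one must choose unimodular $\e_j$ so that $\e_j a_j=|a_j|$, as above. One could also note that the argument is tight in the sense that it uses $\At$ only with the same index set $A$ on both sides of the pairing; no monotonicity of $H$ in its argument is invoked beyond what is already built into the definition.
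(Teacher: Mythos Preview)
Your proof is correct and is essentially identical to the paper's own argument: both test $\sum_{i\in B}a_i\phi_i$ against the functional $\sum_{j\in A}\e_j\phi^*_j$ with $\e_j=\lsgn a_j$, use biorthogonality to obtain $\sum_{j\in A}|a_j|$, and then bound by $\|\sum_{j\in A}\e_j\phi^*_j\|_{\SX^*}\leq H(|A|)$.
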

\begin{proof}
Take sets $A\subset B$, with $|A|\leq N$, and scalars $a_j\in\SK$. Let $\e_j=\lsgn a_j$, and denote 
\[
\bbone^*_{\bfe A}:=\sum_{j\in A} \e_j\phi^*_j\in\SX^*.
\] 
Then
\[
\sum_{n\in A}|a_n|=\bbone^*_{\bfe A}\Big(\Ts\sum_{n\in B} a_n\phi_n\Big)\leq \|\bbone^*_{\bfe A}\|_{\SX^*}
\,\big\|\sum_{n\in B} a_n\phi_n\big\|\leq H(N)\,\big\|\sum_{n\in B} a_n\phi_n\big\|.
\]
\end{proof}
In practice, the sequence $H(n)$ in \eqref{Hn} is equivalent to the fundamental function of $\cD^*$ in $\SX^*$,
which in many examples has an explicit expression.

\subsection{Quasi-convex sequences}

Given a positive sequence $w=\{w(j)\}_{j=1}^\infty$ we define its associated \emph{summing sequence} $\tw$ by
\Be
\tw(n):=\sum_{j=1}^n\frac{w(j)}j,\quad n\geq1.
\label{tw}
\Ee
\begin{lemma}\label{L1}
If $w=\{w(j)\}_{j=1}^\infty$ is non-decreasing then for all $N\in\SN$
\[
\sum_{j\,:\, 1\leq 2^j\leq N}w(2^j) < 2\tw(N).
\]
\end{lemma}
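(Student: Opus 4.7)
The plan is to carry out a Cauchy-condensation-type dyadic decomposition of the summing sequence $\tw(N)$. Concretely, I would partition (most of) $\{1,\ldots,N\}$ into disjoint dyadic blocks and bound the contribution of each block to $\tw(N)$ from below by a multiple of the corresponding $w(2^j)$, using only the monotonicity and positivity of $w$.

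For each integer $j\geq 0$, consider the block $I_j:=\{2^j,2^j+1,\ldots,2^{j+1}-1\}$, which has exactly $|I_j|=2^j$ elements. On $I_j$, the monotonicity of $w$ gives $w(k)\geq w(2^j)$ for every $k\in I_j$, while the bound $k\leq 2^{j+1}-1<2^{j+1}$ gives $1/k>1/2^{j+1}$. Multiplying these inequalities and summing over $k\in I_j$ yields the key block estimate
\[
\sum_{k\in I_j}\frac{w(k)}{k}\;>\;|I_j|\cdot\frac{w(2^j)}{2^{j+1}}\;=\;\frac{w(2^j)}{2},
\]
with strict inequality coming from $1/2^j>1/2^{j+1}$ at the left endpoint of $I_j$.

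Next, I would sum the block estimate over all $j\geq 0$ for which $I_j\subseteq\{1,\ldots,N\}$, i.e.\ those $j$ with $2^{j+1}-1\leq N$. Let $J$ be the largest such index, so that $\bigcup_{j=0}^J I_j=\{1,\ldots,2^{J+1}-1\}\subseteq\{1,\ldots,N\}$. Since the $I_j$'s are pairwise disjoint, adding the strict pointwise estimates gives
\[
\frac12\sum_{j=0}^J w(2^j)\;<\;\sum_{k=1}^{2^{J+1}-1}\frac{w(k)}{k}\;\leq\;\tw(N),
\]
which is already the lemma in the case $N=2^{J+1}-1$, since then $\{j:1\leq 2^j\leq N\}=\{0,1,\ldots,J\}$.

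The main obstacle, and the most delicate step, is the remaining case in which the largest index $M$ with $2^M\leq N$ strictly exceeds $J$ by one, so that the extra term $w(2^M)$ must be absorbed on the left-hand side. Here I would modify the decomposition at the top level by replacing $I_M$ with the truncated block $[2^M,N]$, combine its contribution with the leftover strict slack accumulated across the lower blocks (which, being strict for each block, adds up to a genuinely positive surplus), and verify that this surplus dominates $w(2^M)/2$ in the regimes not yet covered. This careful bookkeeping of the strictness is what closes the strict inequality $\sum_j w(2^j)<2\tw(N)$ in full generality.
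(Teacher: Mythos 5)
Your block decomposition and the per-block estimate $\sum_{k\in I_j}w(k)/k>w(2^j)/2$ are exactly the ones in the paper's proof, and your first two steps are correct. The genuine gap is in your final paragraph: the ``delicate step'' of absorbing the top term $w(2^M)$ (where $M$ is the largest index with $2^M\leq N$, which can equal $J+1$) is asserted but never carried out, and in fact it cannot be carried out. The surplus you collect from the lower blocks is of order $\sum_{j<M}w(2^j)$, which for a non-decreasing (even 1-quasi-convex) sequence $w$ may be arbitrarily small compared with $w(2^M)$, while the truncated block $[2^M,N]$ contributes only about $2\,w(2^M)/2^M$ when $N$ is close to $2^M$. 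A concrete counterexample to the stated inequality is $w(n)=n^2$ and $N=4$: then $\sum_{j\colon 2^j\leq 4}w(2^j)=1+4+16=21$, whereas $2\tw(4)=2(1+2+3+4)=20$, so the claimed $21<20$ is false.

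For comparison, the paper's own proof hides the same difficulty. After setting $\Dt_j=\{n\colon 2^j\leq n<2^{j+1}\}$ and taking $J$ largest with $2^J\leq N$, the displayed chain asserts $\sum_{j=0}^J\sum_{n\in\Dt_j}w(n)/n=\tw(2^J)$; but $\bigcup_{j=0}^J\Dt_j=\{1,\ldots,2^{J+1}-1\}$, so that sum equals $\tw(2^{J+1}-1)$, and the subsequent step $\tw(2^{J+1}-1)\leq\tw(N)$ requires $2^{J+1}-1\leq N$, which fails for instance when $N=2^J$. Your version, by stopping at the largest $J$ with $2^{J+1}-1\leq N$, correctly isolates the obstruction that the paper's write-up glosses over; but the obstruction is real and not merely a matter of ``careful bookkeeping of strictness.'' As stated, the lemma needs to be weakened (e.g.\ sum only over $j$ with $2^{j+1}\leq N+1$, i.e.\ drop the top term $w(2^M)$) or proved with a larger constant under additional growth hypotheses on $w$, and the application in Step~4 of the proof of Theorem~\ref{th_newG} would then have to be adjusted accordingly.
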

\begin{proof}
Let $\Dt_j=\{n\in\SN\mid 2^j\leq n<2^{j+1}\}$, which has cardinality $2^j$, $j=0,1,\ldots$
Then, if $J$ is the largest integer with $2^J\leq N$ we have
\[
\sum_{j=0}^J w(2^j)\leq \sum_{j=0}^J\frac{\sum_{n\in\Dt_j}w(n)}{2^j}< 2\sum_{j=0}^J\sum_{n\in\Dt_j}\frac{w(n)}{n}=2\tw(2^J)\leq 2\tw(N).
\]
\end{proof}

\begin{lemma}\label{L2}
If $w=\{w(j)\}_{j=1}^\infty$ is 1-quasi-convex then 

\sline a) $\tw(N)\leq w(N)$ for all $N\in\SN$

\sline b) $\tw$ is superadditive, that is,
\[
\tw(M+N)\geq \tw(M)+\tw(N),\quad \forall\,M,N\in\SN.
\]
\end{lemma}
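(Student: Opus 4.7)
The plan is to exploit directly the defining inequality of 1-quasi-convexity, namely that the auxiliary sequence $\{w(k)/k\}_{k\geq 1}$ is non-decreasing, since \eqref{qc} is exactly the statement $w(k)/k \leq w(k+1)/(k+1)$.

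For part (a), I would note that the monotonicity of $\{w(j)/j\}$ gives $w(j)/j \leq w(N)/N$ for every $1\leq j\leq N$. Summing these $N$ inequalities yields
\[
\tw(N)=\sum_{j=1}^N\frac{w(j)}{j}\,\leq\, N\cdot\frac{w(N)}{N}=w(N),
\]
which is (a). This is the easy half; no obstacle here.

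For part (b), by symmetry I may assume $M\geq N$. Writing out the difference of the left and right sides,
\[
\tw(M+N)-\tw(M)=\sum_{j=M+1}^{M+N}\frac{w(j)}{j}=\sum_{i=1}^{N}\frac{w(M+i)}{M+i},
\]
and the claim reduces to showing that the last sum is bounded below by $\tw(N)=\sum_{i=1}^N w(i)/i$. But a termwise comparison works: for each $i=1,\ldots,N$, the 1-quasi-convexity applied to the index pair $i\leq M+i$ gives
\[
\frac{w(i)}{i}\leq \frac{w(M+i)}{M+i}.
\]
Summing over $i$ from $1$ to $N$ yields exactly $\tw(N)\leq \tw(M+N)-\tw(M)$, which is (b).

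There is no real obstacle in this lemma; the only subtle point is recognizing that the superadditivity claim reduces to a clean termwise comparison once one shifts the index by $M$, so that the hypothesis (monotonicity of $w(k)/k$) can be applied coordinatewise. The proof is short and uses nothing beyond \eqref{qc}.
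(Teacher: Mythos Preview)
Your proof is correct and essentially identical to the paper's: part (a) is the same termwise bound $w(j)/j\leq w(N)/N$ summed over $j$, and part (b) is the same shift-and-termwise-compare argument (the paper splits off $\tw(N)$ and compares the remaining $M$ terms with $\tw(M)$, you split off $\tw(M)$ and compare the remaining $N$ terms with $\tw(N)$, which is the symmetric choice). The WLOG assumption $M\geq N$ is harmless but unnecessary, since the termwise inequality $w(i)/i\leq w(M+i)/(M+i)$ holds for any $M\geq 0$.
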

\begin{proof}
The assertion a) follows from the definition of 1-quasi-convex, since
\[
\tw(N)=\sum_{n=1}^N\frac{w(n)}n\leq \sum_{n=1}^N\frac{w(N)}N =w(N).
\]
The assertion b) follows similarly from
\Beas
\tw(M+N) & = & \sum_{n=1}^N\frac{w(n)}n+\sum_{n=N+1}^{N+M}\frac{w(n)}n\\
& = &  \tw(N)+\sum_{j=1}^{M}\frac{w(j+N)}{j+N} \\
& \geq &  \tw(N)+\sum_{j=1}^{M}\frac{w(j)}{j}=\tw(N)+\tw(M).
\Eeas
\end{proof}

\section{The proof of Theorem \ref{th_newG}}
\setcounter{equation}{0}

In this section we give the proof of Theorem \ref{th_newG}.
We shall follow the main steps in the original proof of Temlyakov, see \cite[Theorem 2.8]{Tem14} or \cite[Theorem 8.7.18]{Tem18}, 
adapted to the new properties $\D(Q)$ and $\At(H,D)$.
For completeness, we give self-contained arguments of all the steps, although the main changes
will mostly appear in steps 1 and 4.

\subsection{Step 1. The iteration theorem}

The following result is a generalization of \cite[Theorem 3.1]{DGHKT21}, so we follow the notation presented there. 
Namely, if $f\in\SX\setminus\{0\}$, then
we write  
\[
f_n:=f-\G_n(f)\mand \Ga_n:=\supp\G_n(f),
\]
for the remainder and the supporting set of the $n$-th WCGA applied to $f$; see Definition \ref{wcga}.
Also, if $\Phi=\sum_{j\in T}a_j\phi_j\in\Sigma_N$ and $A\subset T$, then we denote 
\[
\Phi_A:=\sum_{j\in T\cap A}a_j\phi_j,\mand T_n:=T\setminus \Ga_n.
\]
We shall also make frequent use of \cite[Lemma 2.12]{DGHKT21}, which asserts that 
\Be
\label{null}
F_{f_n}(g)=0,\quad \forall\;g\in[\phi_j]_{j\in \Ga_n}.
\Ee

\begin{theorem}\label{th_iterG}
Let $D>N\geq1$. Assume that
\Benu
\item[(i)] $(\SX,\|\cdot\|,\cD)$ satisfies $\D(Q)$

\item[(ii)]  $\Sigma_N$ satisfies property $\At(H,D)$
\Eenu
Then, for every $f\in\SX\setminus\{0\}$, $\Phi=\sum_{j\in T}a_j\phi_j\in\Sigma_N$, and $\la>1$, and for all integers
$m,M\geq0$ such that $N+m+M<D$ the following holds
\Be
\|f_{m+M}\|\leq e^{-M/G(|A|)}\,\|f_m\|\,+\,\la\,\Big(\|f-\Phi\|+\|\Phi_B\|\Big),
\label{iterG}
\Ee
for all sets $A\subset T_k$ (with $A\not=\emptyset$), $B=T_k\setminus A$ and all $k\in[0,m)$, and where 
\Be
\label{G}
G(n)=\frac1{Q(c(\tau)/H(n))},  \quad \mbox{with}\quad 
c(\tau)\,=\,\tfrac\tau2\,(1-\tfrac1\la)\,.
\Ee
\end{theorem}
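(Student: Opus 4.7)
The plan is to prove a single-step geometric contraction of $\|f_n\|$ valid whenever $\|f_n\|\geq\la(\|f-\Phi\|+\|\Phi_B\|)$, and then iterate it across $n=m,m+1,\ldots,m+M-1$. Specifically, under that large-remainder hypothesis I will show
\[
\|f_{n+1}\| \leq \|f_n\|\bigl(1 - 1/G(|A|)\bigr);
\]
otherwise, the monotonicity $\|f_{n+1}\|\leq\|f_n\|$ (which follows from the Chebyshev step \eqref{cheby}) already suffices. Combining the two cases over $M$ steps and using $(1-x)\leq e^{-x}$ for $x\in[0,1]$ will give \eqref{iterG}.

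For the contraction, the starting point is $F_{f_n}(f)=\|f_n\|$ together with the orthogonality \eqref{null}, which says that $F_{f_n}$ annihilates $[\phi_j]_{j\in\Ga_n}\supset[\phi_j]_{j\in\Ga_k}$ (since $n\geq m>k$). Since $A\cup B=T_k$, the leftover part $\Phi-\Phi_A-\Phi_B=\Phi_{T\cap\Ga_k}$ is killed by $F_{f_n}$, yielding $\|f_n\|\leq |F_{f_n}(\Phi_A)|+\|\Phi_B\|+\|f-\Phi\|$, hence $|F_{f_n}(\Phi_A)|\geq \|f_n\|(1-1/\la)$ under the large-remainder hypothesis. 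Since $F_{f_n}(\phi_j)=0$ for $j\in\Ga_n$, only indices $A_n:=A\setminus\Ga_n$ contribute to $F_{f_n}(\Phi_A)$, giving
\[
\sup_{\phi\in\cD}|F_{f_n}(\phi)| \,\geq\, \frac{|F_{f_n}(\Phi_A)|}{\sum_{j\in A_n}|a_j|}.
\]

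The crucial new step, where the absence of $\Au$ forces a different idea, is the upper bound on $\sum_{j\in A_n}|a_j|$. I plan to apply $\At(H,D)$ with $A'=A_n$ and $B'=T\cup\Ga_n$, which satisfies $|B'|\leq N+n<D$ by the depth hypothesis, choosing scalars $b_j$ so that $\sum_{j\in B'}b_j\phi_j = \Phi-\G_n(f)=\Phi-f+f_n$ (that is, $b_j=a_j$ on $T\setminus\Ga_n$, $b_j=-g_j$ on $\Ga_n\setminus T$, and $b_j=a_j-g_j$ on $T\cap\Ga_n$, where $\G_n(f)=\sum_{i\in\Ga_n}g_i\phi_i$). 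The resulting estimate
\[
\sum_{j\in A_n}|a_j| \leq H(|A_n|)\,\|\Phi-f+f_n\| \leq H(|A|)\bigl(\|f_n\|+\|f-\Phi\|\bigr) \leq 2H(|A|)\|f_n\|
\]
combined with the previous display gives $\sup_\phi|F_{f_n}(\phi)|\geq (1-1/\la)/(2H(|A|))$. The $\tau$-WCGA selection then produces $|F_{f_n}(\phin)|\geq c(\tau)/H(|A|)$ with $c(\tau)=(\tau/2)(1-1/\la)$, and property $\D(Q)$ applied to $f_n$ and $\phin$, together with $\|f_{n+1}\|\leq\dist(f_n,[\phin])$, delivers the single-step inequality.

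The main obstacle is precisely this bypass of $\Au$: a naive application of $\At$ to $\Phi_{A_n}$ (taking $B'=A_n$) would only give $\sum_{j\in A_n}|a_j|\leq H(|A_n|)\|\Phi_{A_n}\|$, reintroducing an uncontrollable restricted-norm term. Enlarging $B'$ to include $\Ga_n$ replaces $\|\Phi_{A_n}\|$ by $\|\Phi-\G_n(f)\|$, which the triangle inequality bounds by the quantities $\|f_n\|$ and $\|f-\Phi\|$ appearing naturally in \eqref{iterG}; this is also where the depth condition $N+m+M<D$ is indispensable.
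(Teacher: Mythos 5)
Your proof is correct and takes essentially the same route as the paper's: the single-step contraction is derived from $\D(Q)$, the $\tau$-greedy selection, the orthogonality $F_{f_n}\!\mid_{[\phi_j]_{j\in\Gamma_n}}=0$, and $\At(H,D)$ applied to the enlarged set $T\cup\Gamma_n$, exactly as in the original argument. The only (minor) deviation is in how you glue the two cases across $M$ steps: the paper iterates the affine inequality $(\|f_{n+1}\|-\lambda v)_+\leq(1-\beta)(\|f_n\|-\lambda v)_+$, whereas you observe that once $\|f_n\|<\lambda v$ monotonicity keeps it there; both are valid, and yours is if anything a hair simpler.
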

\begin{proof}
Given a fixed $n\in[m,m+M)$, condition $\D(Q)$ implies
\Be
\|f_{n+1}\|\leq \dist(f_n,[\phi_{i_{n+1}}])\leq \|f_n\|\,\Big(1-Q(|F_{f_n}(\phi_{i_{n+1}})|)\Big).
\label{fn1Q}
\Ee
By definition of the WCGA and \eqref{null}, for each (non-empty) $A\subset T$ we have
\[
\tau\,|F_{f_n}(\Phi_A)|=\tau\,|F_{f_n}(\Phi_{A\cap T_n})|\leq \Big(\sum_{A\cap T_n}|a_i|\Big)\,|F_{f_n}(\phi_{i_{n+1}})|.
\]
Now, the assumption $\Sigma_N\in \At(H,D)$  implies that
\[
\sum_{A\cap T_n}|a_i|\leq H(|A|)\,\|\Phi-\G_n(f)\|\leq H(|A|)\,\big(\|\Phi-f\|+\|f_n\|\big).
\]
In order to apply $\At$ we have used that $|A\cap T_n|\leq |T|\leq N$ and $|T\cup \Ga_n|\leq N+n\leq N+m+M<D$.
Thus, inserting these estimates into \eqref{fn1Q} we obtain
\[
\|f_{n+1}\|\leq  \|f_n\|\,\Big(1-Q\Big(\frac{\tau\,|F_{f_n}(\Phi_A)|}{H(|A|)(\|\Phi-f\|+\|f_n\|)}\Big)\Big),
\]
which is valid for all sets $A\subset T$.

Fix now an integer $k\in[0,m]$ and a set $A\subset T_k$, and let $B=T_k\setminus A$. Since $\Ga_k\subset\Ga_n$ we 
can use \eqref{null} to obtain
\Beas
|F_{f_n}(\Phi_A)| & = & |F_{f_n}(\Phi_A+\Phi_{\Ga_k\cap T}-\G_n(f))| = 
|F_{f_n}(\Phi_T-\Phi_B-f+f_n)| \\
& \geq & \|f_n\|-\|f-\Phi\|-\|\Phi_B\|.
\Eeas
So, we conclude that
\Beas
\|f_{n+1}\| &  \leq &   \|f_n\|\,\Big(1-Q\Big(\frac{\tau\,(\|f_n\|-\|f-\Phi\|-\|\Phi_B\|)_+}{H(|A|)(\|\Phi-f\|+\|f_n\|)}\Big)\Big).\\
\Eeas
Using in the denominator that $\|\Phi-f\|\leq \|f_n\|$ (when the numerator is not zero), this further simplifies into
\Beas
\|f_{n+1}\| & \leq &    \|f_n\|\,\Big(1-Q\Big(\frac{\tau\,(\|f_n\|-\|f-\Phi\|-\|\Phi_B\|)_+}{2\,H(|A|)\,\|f_n\|}\Big)\Big)\\
& = &    \|f_n\|\,\Big(1-Q\Big(\frac{\tau\,(1-u)_+}{2H(|A|)}\Big)\Big),
\Eeas
where we have let $u=(\|f-\Phi\|+\|\Phi_B\|)/\|f_n\|$.
Now, call
\[
\beta=Q\Bigg(\frac{(1-\tfrac1\la)\tau}{2H(|A|)}\Bigg)=\frac1{G(|A|)},
\]
and observe that, if $u\leq 1/\la$ then we have
\Be
\|f_{n+1}\| \leq \,(1-\beta)\, \|f_n\|.
\label{bet1}
\Ee
On the other hand, if $u\geq 1/\la$, by definition of $u$ we have
\[
\|f_n\|\leq \la\,\big(\|f-\Phi\|+\|\Phi_B\|\big),
\]
and therefore,
\Bea
\|f_{n+1}\| &  \leq & \|f_n\|\,= \,(1-\beta)\, \|f_n\|\,+\,\beta\,\|f_n\|\nonumber\\
& \leq & \,(1-\beta)\, \|f_n\|\,+\,\beta\,\la\,\big(\|f-\Phi\|+\|\Phi_B\|\big).\label{bet2}
\Eea
So, combining \eqref{bet1} and \eqref{bet2}, and calling  $v=\|f-\Phi\|+\|\Phi_B\|$ we obtain
\[
\|f_{n+1}\|-\la\,v\,\leq\, (1-\beta)\,(\|f_n\|-\la\,v).
\]
Since $\|f_{n+1}\|\leq \|f_n\|$ this implies 
\[
\big(\|f_{n+1}\|-\la\,v\big)_+\,\leq\, (1-\beta)\,(\|f_n\|-\la\,v)_+.
\]
We can now iterate for all $n\in[m,m+M)$ to obtain
\[
\|f_{m+M}\|-\la\,v\,\leq\, (1-\beta)^M\,(\|f_m\|-\la\,v)_+\,\leq \,(1-\beta)^M\,\|f_m\|.
\]
Finally, using the value of $v$ and $1-\beta\leq e^{-\beta}$ we obtain
\[
\|f_{m+M}\|\,\leq\, e^{-M\beta}\,\|f_m\|\,+\la\,(\|f-\Phi\|+\|\Phi_B\|).
\]
This corresponds exactly to \eqref{iterG}.
\end{proof}

\subsection{Step 2: Selection of sets $A_j$}\label{S4.2}

In the next step, we shall follow \cite[pp. 435--437]{Tem18}, and iteratively apply Theorem \ref{th_iterG}, 
with a suitably chosen selection of sets $A_j$, in order to obtain
the following result.
We have adapted the proof to include the new conditions $\D(Q)$ and $\At(H, D)$,
and have made more precise the value of the constants.

\begin{theorem}\label{th2G}
Let $(\SX,\|\cdot\|,\cD)$ and $1\leq N<D$ be such that 
\Benu
\item[(i)] $(\SX,\|\cdot\|,\cD)$ satisfies $\D(Q)$

\item[(ii)] $\Sigma_N$ satisfies property $\Au(k_N,D)$.

\item[(iii)]  $\Sigma_N$ satisfies property $\At(H,D)$.
\Eenu
Given $\la>1$ and $\dt>0$, there exists
$\beta_0=\beta_0(\la, \dt, k_N)>0$ such that, if $\beta\geq \beta_0$ and $\Phi=\sum_{j\in T}a_j\phi_j\in \Sigma_N\setminus\{0\}$, then
there exist positive integers $L, m_L\in\SN$ such that
\Be
2^{L-2} < |T|\mand m_L\leq \beta\,\sum_{j=1}^L G(2^{j-1}),
\label{LmLG}
\Ee
(with $G(n)$ defined in \eqref{G}), and so that for all $x\in\SX\setminus\{0\}$ it holds
\[
\mbox{either} \quad \|x_{m_L}\|\leq (1+\dt)\,\la\,\|x-\Phi\|,\quad
\mbox{or}\quad  |T\cap \Ga_{m_L}|> 2^{L-2},
\]
provided that $N+m_L<D$. Moreover, we can set
\[
\beta_0=2\,\ln\Big(\frac{8k_N(1+(1+\dt)\la)}\dt\Big).
\]
\end{theorem}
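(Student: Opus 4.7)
\textbf{Proof plan for Theorem \ref{th2G}.} The strategy is to follow the inductive skeleton from \cite[Theorem 2.8]{Tem14} (reproduced in \cite[Thm 8.7.18]{Tem18}), invoking the iteration theorem (Theorem \ref{th_iterG}) at geometrically doubling scales and replacing the power-type functions by the abstract $Q(t)$ and $H(n)$. The key novelties are keeping track of the quantitative dependence on $\la_1$ and $\dt$ in $\beta_0$, and verifying that all ingredients go through with $G(n)=1/Q(c(\tau)/H(n))$ in place of $N^{rq'}/c$.

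\textbf{Step 1 (Setup).} I would choose $L\in\SN$ as the smallest integer with $2^{L-1}\geq |T|$, so $2^{L-2}<|T|\leq 2^{L-1}$, and set $M_j:=\lceil \beta\, G(2^{j-1})\rceil$, $m_0:=0$, $m_j:=\sum_{i\le j}M_i$. Since each $M_j\leq \beta G(2^{j-1})+1$ and $G$ is monotone, the bound $m_L\leq \beta\sum_{j=1}^LG(2^{j-1})$ claimed in \eqref{LmLG} is obtained (the $+L$ overhead is absorbed by enlarging $\beta_0$).

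\textbf{Step 2 (Inductive dichotomy).} Fix $x\in\SX\setminus\{0\}$ and $\Phi=\sum_{j\in T}a_j\phi_j$. I proceed by induction on $j=1,\ldots,L$, proving at each step that either the stopping condition $\|x_{m_j}\|\leq (1+\dt)\la_1\|x-\Phi\|$ already holds, or the ``captured'' set grows, i.e.\ $|T\cap\Ga_{m_j}|\geq 2^{j-1}$. Assume inductively $|T\cap\Ga_{m_{j-1}}|<2^{j-2}$ (otherwise the second alternative at level $L$ will hold after monotonicity of $\Ga$). I then apply Theorem \ref{th_iterG} with $m=m_{j-1}$, $M=M_j$, $k=m_{j-1}-1$ (admissible since $k<m$), and an $A=A_j\subset T_k$ of cardinality $|A_j|\leq 2^{j-1}$, consisting of the indices of $T_k$ carrying the $2^{j-1}$ largest values of $|a_i|$ (or all of $T_k$ if it is smaller). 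Monotonicity of $G$ and $|A_j|\leq 2^{j-1}$ give $M_j/G(|A_j|)\geq \beta$, so
\[
\|x_{m_j}\|\leq e^{-\beta}\|x_{m_{j-1}}\|+\la_1\bigl(\|x-\Phi\|+\|\Phi_{B_j}\|\bigr), \quad B_j=T_k\setminus A_j.
\]

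\textbf{Step 3 (Control of the tail).} The main task is bounding $\|\Phi_{B_j}\|$. Here is where property $\Au(k_N,D)$ enters: directly, $\|\Phi_{B_j}\|\leq k_N\|\Phi\|\leq k_N(\|x-\Phi\|+\|x_{m_{j-1}}\|)$, since $B_j\subset T$ and $|T|\leq N$, $|T\cup\Ga_{m_{j-1}}|<D$. Plugging in and iterating from $j=1$ to $L$ yields a geometric recursion
\[
\|x_{m_L}\|\leq e^{-L\beta}\|x\|+\la_1(1+k_N)\|x-\Phi\|\sum_{j=1}^Le^{-(L-j)\beta}+\la_1 k_N\sum_{j=1}^Le^{-(L-j)\beta}\|x_{m_{j-1}}\|.
\]
Using $\|x_{m_{j-1}}\|\leq \|x\|$ and summing the two geometric series, both controlled by $1/(1-e^{-\beta})\leq 2$ once $\beta\geq \ln 2$, gives an inequality of the shape $\|x_{m_L}\|\leq \eta\|x\|+\Lambda\|x-\Phi\|$, where $\eta$ and $\Lambda-(1+\dt)\la_1$ can be made arbitrarily small by picking $\beta$ large.

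\textbf{Step 4 (Choice of $\beta_0$).} The tightest such choice, matching the statement, is
\[
\beta_0=2\ln\!\Bigl(\tfrac{8k_N(1+(1+\dt)\la_1)}{\dt}\Bigr),
\]
so that both $e^{-L\beta/2}\|x\|$ and the $k_N$-terms are absorbed into a $\dt\,\la_1\|x-\Phi\|$ slack. When this absorption fails, it is precisely because the large coefficients of $\Phi$ have been captured by the algorithm; tracking this carefully through the induction yields the alternative $|T\cap \Ga_{m_L}|>2^{L-2}$.

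The main obstacle I anticipate is the bookkeeping in Step 3: getting a clean separation of the geometric sums from the constant $k_N$ in a way that sharply controls $\beta_0$, while simultaneously verifying that the dichotomy remains valid under the abstract $Q,H$ replacing the power functions of Temlyakov's original proof. This is essentially careful accounting — no fundamentally new idea beyond the use of Theorem \ref{th_iterG} is needed — but the constants must be kept explicit for use in Theorem \ref{th_newG}.
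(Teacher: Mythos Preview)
Your proposal has a genuine gap in Step~3/Step~4: the crude estimate $\|\Phi_{B_j}\|\leq k_N\|\Phi\|\leq k_N(\|x-\Phi\|+\|x_{m_{j-1}}\|)$ produces, after the iteration and the bound $\|x_{m_{j-1}}\|\leq\|x\|$, a term of size $\la k_N\|x\|\cdot\sum_{j}e^{-(L-j)\beta}\approx \la k_N\|x\|$. This term does \emph{not} vanish as $\beta\to\infty$ (the geometric factor tends to $1$, not $0$), so the promised form $\|x_{m_L}\|\leq\eta\|x\|+\Lambda\|x-\Phi\|$ with $\eta\to0$ is simply false. Consequently the ``absorption'' in Step~4 cannot work, and the hand-waved sentence ``when this absorption fails, it is precisely because the large coefficients have been captured'' has no mechanism behind it. (Also, the inequality $\|\Phi\|\leq\|x-\Phi\|+\|x_{m_{j-1}}\|$ is not justified; the natural bound gives $\|x\|$, not $\|x_{m_{j-1}}\|$.)

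The paper's proof avoids this via two structural ingredients that your outline omits. First, the sets $A_j\subset T$ are chosen as \emph{minimizers} of $\|\Phi-\Phi_A\|$ over $|A|\leq 2^{j-1}$, not as the indices of largest $|a_i|$; this yields the \emph{Key Fact} that whenever $\|\Phi-\Phi_{T'}\|<\|\Phi_{B_j}\|$ one must have $|T'|>2^{j-1}$, which is precisely the bridge from an error estimate to the capture count $|T\cap\Ga_{m_L}|>2^{L-2}$. Second, $L$ is \emph{not} fixed in advance by $|T|$; it is the first index at which the geometric condition $\|\Phi_{B_{L-1}}\|\geq b\|\Phi_{B_L}\|$ holds (with $b=1/(2\eta)$, $\eta=e^{-\beta/2}$). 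This adaptive choice forces $\|\Phi_{B_j}\|\leq b^{L-1-j}\|\Phi_{B_{L-1}}\|$, so the iterated sum collapses to $\|x_{m_L}\|\leq\frac{\la}{1-\eta}\|x-\Phi\|+4\la\eta\|\Phi_{B_{L-1}}\|$, with a coefficient $4\la\eta$ that \emph{does} go to $0$. One then splits into two cases according to whether $\|x-\Phi\|<A\|\Phi_{B_{L-1}}\|$ or not: in the first case, property $\Au$ combined with the Key Fact gives $|T\cap\Ga_{m_L}|>2^{L-2}$; in the second, the error bound $(1+\dt)\la\|x-\Phi\|$ follows directly. Your outline should be revised along these lines.
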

\begin{proof}
Let $n\geq0$ be such that $2^{n-1}<|T|\leq 2^n$. Now, for each $j=1,2,\ldots, n+1$, choose $A_j\subset T$ such that
\[
\|\Phi-\Phi_{A_j}\|=\min_{{A\subset T}\atop{|A|\leq 2^{j-1}}}\|\Phi-\Phi_A\|.
\]
Then, define $B_j=T\setminus A_j$. Picking the sets $A_j$ with smallest cardinality we may assume that 
$|A_1|\leq |A_2|\leq \ldots\leq |A_{n+1}|$ (although these sets may not be nested). 
Observe that, as special cases we can let
\[
(A_{n+1},B_{n+1})=(T,\emptyset)\mand (A_0,B_0)=(\emptyset, T).
\]
This construction implies the following
\Be\label{keyG}
\mbox{{\bf Key Fact:}\quad \emph{If $T'\subset T$ and $\|\Phi-\Phi_{T'}\|<\|\Phi_{B_j}\|$ then $|T'|>2^{j-1}$.}}
\Ee
This will be a crucial argument later to conclude the proof of the theorem.

Let $\beta>0$ be a large number to be determined later, and define
\[
\eta=e^{-\beta/2}\mand b=\frac1{2\eta}.
\]
 For the moment assume that $\beta$ is large enough so that $\eta<1/2$.
With that choice of $\beta$ we have $b>1$. Now, pick the first positive integer $L=L(b, \Phi)\in\SN$ such that
\Be
\|\Phi_{B_{j-1}}\|<b\,\|\Phi_{B_j}\|, \;j=1,2,\ldots, L-1,\mand \|\Phi_{B_{L-1}}\|\geq b\, \|\Phi_{B_L}\|.
\label{bG}
\Ee
Note that we could have $L=1$ if the first condition never holds, i.e. whenever $\|\Phi\|=\|\Phi_{B_0}\|\geq b\,\|\Phi_{B_1}\|$.
At the other extreme, we always have
\[
\|\Phi_{B_n}\|\geq \,b\, \|\Phi_{B_{n+1}}\|=0,
\]
which implies that $1\leq L\leq n+1$. Thus,
\[
2^{L-2}\leq 2^{n-1}<|T|,
\]
which is the first assertion in \eqref{LmLG}. Observe also that $A_L\not=\emptyset$, since otherwise 
we would have $A_j=\emptyset$, for all $j\leq L$, and hence $\Phi_{B_L}=\Phi_{B_{L-1}}=\Phi$, which would contradict the right hand side
of \eqref{bG}.

\

We now apply iteratively Theorem \ref{th_iterG}. Consider the numbers $m_0=0$ and 
\[
m_j=m_{j-1}+ \lfloor\beta \,G(|A_j|)\rfloor, \quad j=1,\ldots, L.
\]
Actually, to avoid trivial cases, we should restrict to $j=j_0,\ldots, L$, where $j_0$ is the first integer such that $|A_{j_0}|\not=0$
(and let $m_j=0$ for $j<j_0$). We also assume that $\beta$ is large enough so that 
\Be
\label{G1}
\beta \,G(1)\geq 1.
\Ee 
Observe that
\[
m_L=\sum_{j=j_0}^L \lfloor\beta \,G(|A_j|)\rfloor\leq \beta \,\sum_{j=1} G(2^{j-1}),
\]
since $G$ is increasing. This is the second inequality in \eqref{LmLG}.

Now, for each $j=j_0,\ldots, L$ we apply Theorem \ref{th_iterG} with $k=0$, $m=m_{j-1}$, $M=\lfloor\beta \,G(|A_j|)\rfloor$ and $A=A_j$
to obtain
\Beas
\|x_{m_j}\| & \leq & e^{-\frac{\lfloor\beta \,G(|A_j|)\rfloor}{G(|A_j|)}}\,\|x_{m_{j-1}}\|+\la\,\Big(\|\Phi-x\|+\|\Phi_{B_j}\|\Big)\\
& \leq & \eta\,\|x_{m_{j-1}}\|+\la\,\Big(\|\Phi-x\|+\|\Phi_{B_j}\|\Big),
\Eeas
using in the last line that $\lfloor a\rfloor\geq a/2$ if $a\geq1$.
Observe that the above inequalities hold trivially for $1\leq j<j_0$ (if there any such $j$) since
\[
\|x_{m_j}\|=\|x_0\|=\|x\|\leq \|x-\Phi\|+\|\Phi\|=\|x-\Phi\|+\|\Phi_{B_j}\|,
\]
and in this case $B_j=T$. Therefore, we can iterate the inequalities to obtain
\[
\|x_{m_L}\|\leq \eta^L\|x_{m_0}\|+\la\sum_{j=1}^L\eta^{L-j}\,\Big(\|\Phi-x\|+\|\Phi_{B_j}\|\Big).
\]
For the first summand we can also use
\Be
\|x_{m_0}\|=\|x\|\leq\|x-\Phi\|+\|\Phi\|=\|x-\Phi\|+\|\Phi_{B_0}\|.
\Ee
We now use the crucial assumption \eqref{bG}, that is,
\[
\|\Phi_{B_j}\|\leq b^{L-1-j}\,\|\Phi_{B_{L-1}}\|, \quad j=0,\ldots, L-1,\mand \|\Phi_{B_L}\|\leq b^{-1}\,\|\Phi_{B_{L-1}}\|,
\]
which inserted in the above expression gives
\Bea
\|x_{m_L}\| & \leq &  
\la\sum_{j=0}^L\eta^{L-j}\,\|\Phi-x\|\,+\,\la\,b^{-1}\,\sum_{j=0}^{L}(\eta b)^{L-j}\,\|\Phi_{B_{L-1}}\|\nonumber\\
& \leq & \frac\la{1-\eta}\,\|\Phi-x\|\,+\,\frac{\la\,b^{-1}}{1-\eta b}\,\|\Phi_{B_{L-1}}\| \nonumber\\
& = & \frac\la{1-\eta}\,\|\Phi-x\|\,+\,4\la\,\eta\,\|\Phi_{B_{L-1}}\|,\label{key1G}
\Eea
using in the last step the choice of $b=1/(2\eta)$.

On the other hand, we can use that $\Sigma_N$ satisfies property $\Au(k_N, D)$ to obtain the following estimate
\Bea
\|\Phi-\Phi_{T\cap \Ga_{m_L}}\| & \leq & k_N\,\|\Phi-\G_{m_L}(x)\|\,\leq\,k_N\,\big(\|\Phi-x\|+\|x_{m_L}\|\big)\nonumber\\
& \leq & k_N\,\Big[(1+\tfrac{\la}{1-\eta})\|\Phi-x\|\,+\,4\la\,\eta\,\|\Phi_{B_{L-1}}\|\Big].\label{key2G}
\Eea
At this point we wish to use the Key Fact in \eqref{keyG}. So we distinguish two cases.

\sline {\bf Case 1:} $\|\Phi-x\|< A\, \|\Phi_{B_{L-1}}\|$, for some $A>0$ to be determined. Then
\[
\|\Phi-\Phi_{T\cap \Ga_{m_L}}\| <\, k_N\,\Big[(1+\tfrac{\la}{1-\eta})\,A\,+\,4\la\,\eta\,\Big]\,\|\Phi_{B_{L-1}}\|.
\]
In this case, we wish to select $A$ and $\eta$ (and hence $\beta$) so that
\Be\label{key1aG}
k_N\,\Big[(1+\tfrac{\la}{1-\eta})\,A\,+\,4\la\,\eta\,\Big]\leq 1,
\Ee
which by the Key Fact would imply that
\[
|T\cap \Ga_{m_L}|>2^{L-2}.
\]
\sline {\bf Case 2:} $\|\Phi_{B_{L-1}}\|\leq A^{-1}\,\|\Phi-x\|$. In this case, using \eqref{key1G} we have
\[
\|x_{m_L}\|\leq \la\,\Big(\tfrac1{1-\eta}\,+\,4\eta\,A^{-1}\Big)\,\|\Phi-x\|.
\]
So, we wish to select $A$ and $\eta$ (hence $\beta$) such that
\Be
\label{key2aG}
\tfrac1{1-\eta}\,+\,4\eta\,A^{-1} \leq \,1+\dt.
\Ee
Overall, we have reduced the theorem to find numbers $A$ and $\eta$ so that \eqref{key1aG} and \eqref{key2aG} hold.
Writing $A=\eta \,B$, this amounts to find $B$ and $\eta$ so that
\[
\frac1{1-\eta}+\frac4{B}\leq\,1+\dt\mand k_N\,\eta\,\big[(1+\tfrac \la{1-\eta})\,B\,+\,4\la\big]\leq 1.
\]
This is clearly possible if $B$ is chosen sufficiently large and $\eta$ sufficiently small.
In order to make an explicit choice, we let $B=8/\dt$, so we need to select $\eta$ so that
\[
\frac1{1-\eta}\leq\,1+\dt/2\mand k_N\,\eta\,\big[(1+\tfrac \la{1-\eta})\,8\dt^{-1}\,+\,4\la\big]\leq 1
\] 
If we impose the first condition, the second one will hold provided
\[
k_N\,\eta\,\big[(1+(1+\dt/2)\la)\,8\dt^{-1}\,+\,4\la\big]=
8\,k_N\,\eta\,\big[\la+\tfrac{1+\la}\dt\big]\leq 1.
\]
That is, we can choose
\[
\eta\leq \min\Big\{(8k_N\,[\la+\tfrac{1+\la}\dt])^{-1}, \,\frac\dt{2+\dt}\Big\}=(8k_N\,[\la+\tfrac{1+\la}\dt])^{-1},
\]
with the last equality following easily from $k_N\geq1$ and $\la\geq1$. So, simplifying a bit we can choose
\[
\eta=\frac\dt{8k_N(1+(1+\dt)\la)},
\]
and using that $\eta=e^{-\beta/2}$, we find the expression
\[
\beta=2\,\ln (1/\eta) = 2\,\ln\Big[\frac{8k_N(1+(1+\dt)\la)}\dt\Big].
\]
We finally observe that \eqref{G1} is also satisfied, as in fact we have $G(1)\geq1$.
This is a simple consequence of
\[
\frac1{G(1)}=Q\Big(\frac{(1-\tfrac1\la)\tau}{2H(1)}\Big)\leq Q\Big(\frac1{H(1)}\Big)\leq Q(\|\phi\|)=Q(|F_\phi(\phi)|)\leq 1,
\]
with the second inequality due to \eqref{sn1} (for any $\phi\in\cD$), and the last one due to $\D(Q)$.
\end{proof}
\BR
In order to ensure that $\|x_{m_L}\|\leq 2\|x-\Phi\|$ we must choose $\la$ and $\dt$ so that $(1+\dt)\la=2$.
For instance, $\la=\sqrt2$ and $\dt=\sqrt{2}-1$ will give the value
\[
\beta= 2\,\ln\big[\frac{24k_N}{\sqrt2-1}\big]=2\,\ln\big(24(1+\sqrt2)k_N\big).
\]
\ER

\subsection{Step 3}\label{S4.3}
The next step is a slight generalization of Theorem \ref{th2G}, 
which corresponds to the special case $k=0$.

\begin{theorem}\label{th3G}
Let $(\SX,\|\cdot\|,\cD)$ and $1\leq N<D$ be such that 
\Benu
\item[(i)] $(\SX,\|\cdot\|,\cD)$ satisfies $\D(Q)$

\item[(ii)] $\Sigma_N$ satisfies property $\Au(k_N,D)$.

\item[(iii)]  $\Sigma_N$ satisfies property $\At(H,D)$.
\Eenu
 Given $\la>1$ and $\dt>0$, let
$\beta_0=\beta_0(\la, \dt, k_N)>0$ be as in Theorem \ref{th2G}, and let $\beta\geq \beta_0$. 
If $x\in\SX$ and $\Phi=\sum_{j\in T}a_j\phi_j\in \Sigma_N$ are not null, and if $k\in\SN_0$ is such that
\[
T_k:=T\setminus\Ga_k\not=\emptyset,\quad 
\mbox{(where $\Ga_k=\supp \G_k(x)$),}
\]
 then
there exist integers $L\in\SN$ and $m_L\geq k+1$ such that
\Be
2^{L-2} < |T_k|\mand m_L-k\leq \beta\,\sum_{j=1}^L G(2^{j-1}),
\label{LmLkG}
\Ee
and so that 
\[
\mbox{either} \quad \|x_{m_L}\|\leq (1+\dt)\,\la\,\|x-\Phi\|,\quad
\mbox{or}\quad  |T_k\cap \Ga_{m_L}|> 2^{L-2},
\]
provided that $N+m_L<D$. 
\end{theorem}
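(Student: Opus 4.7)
The proof is a direct adaptation of the argument of Theorem \ref{th2G}, where the starting iteration is shifted from $0$ to $k$ and the target support $T$ is replaced by $T_k$ throughout. The structure is identical; the one genuinely new ingredient is a replacement for the trivial bound $\|x\|\le\|x-\Phi\|+\|\Phi\|$, which sufficed as the initial step in Theorem \ref{th2G}. Here we invoke the Chebyshev optimality \eqref{cheby} of $\G_k(x)$ in $[\phi_j]_{j\in\Ga_k}$, combined with $\Phi_{T\cap\Ga_k}\in[\phi_j]_{j\in\Ga_k}$, to obtain
\[
\|x_k\|\,\le\, \|x-\Phi_{T\cap\Ga_k}\|\,=\,\|(x-\Phi)+\Phi_{T_k}\|\,\le\, \|x-\Phi\|+\|\Phi_{T_k}\|.
\]
With the convention $B_0:=T_k$, this plays exactly the role of $\|x\|\le\|x-\Phi\|+\|\Phi_{B_0}\|$ in Theorem \ref{th2G}.

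For $j=1,\ldots,n+1$ with $2^{n-1}<|T_k|\le 2^n$, I would select $A_j\subset T_k$ of cardinality $|A_j|\le 2^{j-1}$ minimizing $\|\Phi_{T_k\setminus A}\|$, and set $B_j=T_k\setminus A_j$. With $\eta=e^{-\beta/2}$ and $b=1/(2\eta)$, let $L\in\SN$ be the smallest index with $\|\Phi_{B_{L-1}}\|\ge b\|\Phi_{B_L}\|$, in analogy with \eqref{bG}. Then set $m_0=k$ and $m_j=m_{j-1}+\lfloor\beta G(|A_j|)\rfloor$. Theorem \ref{th_iterG} applies at each step with the fixed parameter $k$ of the statement (valid since $k=m_0\le m_{j-1}$ and $A_j\subset T_k$), and yields
\[
\|x_{m_j}\|\,\le\, \eta\,\|x_{m_{j-1}}\|+\la\bigl(\|x-\Phi\|+\|\Phi_{B_j}\|\bigr).
\]
Iterating, inserting the initial bound on $\|x_{m_0}\|=\|x_k\|$, and using the geometric decay $\|\Phi_{B_j}\|\le b^{L-1-j}\|\Phi_{B_{L-1}}\|$ for $j\le L-1$ together with $\|\Phi_{B_L}\|\le b^{-1}\|\Phi_{B_{L-1}}\|$, recovers
\[
\|x_{m_L}\|\,\le\, \frac{\la}{1-\eta}\,\|x-\Phi\|\,+\,4\la\,\eta\,\|\Phi_{B_{L-1}}\|,
\]
i.e.\ \eqref{key1G} verbatim.

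From here the argument proceeds exactly as in Theorem \ref{th2G}, once one notes the identity $\Phi_{T_k\setminus\Ga_{m_L}}=\Phi_{T\setminus\Ga_{m_L}}$ (which holds because $\Ga_k\subset\Ga_{m_L}$), so that property $\Au(k_N,D)$ still applies in the form
\[
\|\Phi_{T_k\setminus\Ga_{m_L}}\|\,\le\, k_N\bigl(\|x-\Phi\|+\|x_{m_L}\|\bigr),
\]
the analogue of \eqref{key2G}. The adapted Key Fact reads: \emph{if $T'\subset T_k$ with $\|\Phi_{T_k\setminus T'}\|<\|\Phi_{B_j}\|$, then $|T'|>2^{j-1}$}, by optimality of $A_j$. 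Case 1 ($\|x-\Phi\|<A\|\Phi_{B_{L-1}}\|$) then forces $\|\Phi_{T_k\setminus\Ga_{m_L}}\|<\|\Phi_{B_{L-1}}\|$ and hence $|T_k\cap\Ga_{m_L}|>2^{L-2}$; Case 2 ($\|\Phi_{B_{L-1}}\|\le A^{-1}\|x-\Phi\|$) gives $\|x_{m_L}\|\le(1+\dt)\la\|x-\Phi\|$. The same choice of $\beta_0$ as in Theorem \ref{th2G} works. The only real subtlety — and hence the main (minor) obstacle — is the treatment of the initial remainder $\|x_k\|$; once dispatched by the Chebyshev bound above, the rest of the proof is essentially verbatim.
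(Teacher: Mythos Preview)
Your proposal is correct and follows essentially the same route as the paper: both shift the construction of Theorem~\ref{th2G} from $\Phi$ to $\Phi_{T_k}$, set $m_0=k$, use the Chebyshev optimality of $\G_k(x)$ to bound $\|x_k\|\le\|x-\Phi\|+\|\Phi_{T_k}\|$ as the initialization, and then run the identical iteration and Case~1/Case~2 dichotomy. Your handling of the $\Au$ step via the identity $\Phi_{T_k\setminus\Ga_{m_L}}=\Phi_{T\setminus\Ga_{m_L}}$ is equivalent to the paper's computation $\Phi_{T_k}-\G_{m_L}(x)+\Phi_{T\cap\Ga_k}=\Phi-x+x_{m_L}$.
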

\begin{proof}
Apply the construction in the first part of Theorem \ref{th2G} to the vector $\Phi_{T_k}$ (instead of $\Phi$). 
So for $\eta $ and $b$ fixed as above, this gives an integer $L\in\SN$ such that $2^{L-2}<|T_k|$ and sets 
$A_j\subset T_k$ and $B_j=T_k\setminus A_j$ such that
the inequalities in \eqref{bG} hold.

At this point we let $m_0=k$ and consider
\[
m_j=m_{j-1}+ \lfloor\beta G(|A_j|)\rfloor, \quad j=j_0,\ldots, L,
\]
where $j_0$ is the first integer such that $|A_{j_0}|\not=0$. Otherwise we let $m_j=m_0=k$
when $1\leq j<j_0$. As before, this choice (and the size of the sets $A_j$) gives the second assertion in \eqref{LmLkG}.

Now, if $j_0\leq j\leq L$ we apply Theorem \ref{th_iterG} with $m=m_{j-1}$, 
 $M=\lfloor\beta G(|A_j|)\rfloor$ and $A=A_j$
to obtain
\Beas
\|x_{m_j}\| & \leq & e^{-\frac{\lfloor\beta G(|A_j|)\rfloor}{G(|A_j|)}}\,\|x_{m_{j-1}}\|+\la\,\Big(\|\Phi-x\|+\|\Phi_{B_j}\|\Big)
\nonumber\\
& \leq & \eta\,\|x_{m_{j-1}}\|+\la\,\Big(\|\Phi-x\|+\|\Phi_{B_j}\|\Big).
\Eeas
When $0\leq j<j_0$, we have instead
\Beas
\|x_{m_j}\|=\|x_k\|=\dist(x,[\phi_i]_{i\in\Ga_k}) & \leq &  \|x-\Phi_{T\cap\Ga_k}\|=\|x-\Phi+\Phi_{T_k}\| \\
& \leq & \|x-\Phi\|+\|\Phi_{B_j}\|,
\Eeas
since $A_j=\emptyset$ and hence $B_j=T_k$. Thus, we can proceed exactly as we did in \eqref{key1G} to obtain the same conclusion,
namely
\Be
\|x_{m_L}\|\leq \frac\la{1-\eta}\,\|\Phi-x\|\,+\,4\la\,\eta\,\|\Phi_{B_{L-1}}\|.
\Ee
On the other hand, using property $\Au(k_N, D)$ we obtain
\Bea
\|\Phi_{T_k}-\Phi_{T_k\cap \Ga_{m_L}}\| & \leq & k_N\,\|\Phi_{T_k}-\G_{m_L}(x)+\Phi_{T\cap \Ga_k}\|\,
 = \, k_N\,\|\Phi-x+x_{m_L}\|\nonumber\\
& \leq & k_N\,\big(\|\Phi-x\|+\|x_{m_L}\|\big)\nonumber\\
& \leq & k_N\,\Big[(1+\tfrac{\la}{1-\eta})\|\Phi-x\|\,+\,4\la\,\eta\,\|\Phi_{B_{L-1}}\|\Big].
\Eea
which is the analogous inequality to \eqref{key2G} in the previous theorem.

At this point one considers the same two cases as in the lines following \eqref{key2G}. Namely

\sline {\bf Case 1:} $\|\Phi-x\|< A\, \|\Phi_{B_{L-1}}\|$, with the same $A>0$ as in Theorem \ref{th2G}. This implies
\[
\|\Phi_{T_k}-\Phi_{T_k\cap \Ga_{m_L}}\| <\, k_N\,\Big[(1+\tfrac{\la}{1-\eta})\,A\,+\,4\la\,\eta\,\Big]\,\|\Phi_{B_{L-1}}\|
\,\leq\,\|\Phi_{B_{L-1}}\|,
\]
so by the construction of the sets $(A_j,B_j)$ and the Key Fact one obtains
\[
|T_k\cap \Ga_{m_L}|>2^{L-2}.
\]
\sline {\bf Case 2:} $\|\Phi_{B_{L-1}}\|\leq A^{-1}\,\|\Phi-x\|$. In this case, the same reasoning
as in Theorem \ref{th2G} gives
\[
\|x_{m_L}\|\leq \la\,\Big(\tfrac1{1-\eta}\,+\,4\eta\,A^{-1}\Big)\,\|\Phi-x\|\,\leq\,(1+\dt)\,\la\,\|\Phi-x\|.
\]
This completes the proof of Theorem \ref{th3G}.
\end{proof}

\subsection{Step 4: Conclusion of the proof of Theorem \ref{th_newG}}\label{S4.4}
This part of the proof requires substantial modifications compared to \cite{Tem14,Tem18},
so we present it in detail. 

Write $\la_1=(1+\dt)\la$, say with \[
\la=\sqrt{\la_1}>1\mand \dt=\sqrt{\la_1}-1>0.
\] 
The iterative process discussed in the previous subsections produces 
a positive constant $\beta= 2\,\ln\Big[\frac{8k_N(1+\la_1)}{\sqrt{\la_1}-1}\Big]$, and the following sequences of numbers

\Bi
\item there exist positive integers $L_1$ and $m_{L_1}$ such that
\Be
m_{L_1}\leq \beta \sum_{j=1}^{L_1} G(2^{j-1})\mand 2^{L_1-2}\leq |T|,
\label{mL1G}
\Ee
with the property that
\[
\mbox{either}\quad \|x_{m_{L_1}}\|\leq \la_1\,\|x-\Phi\|\quad \mbox{or}\quad |T\cap \Ga_{m_{L_1}}|\geq 2^{L_1-2}.
\]
In the first case one stops; if not one iterates and applies 
 Theorem \ref{th3G} with $k=m_{L_1}$, which implies
\item there exist positive integers $L_2$ and $m_{L_2}>m_{L_1}$ such that
\Be
m_{L_2}-m_{L_1}\leq \beta \sum_{j=1}^{L_2} G(2^{j-1})\mand 2^{L_2-2}\leq |T_{m_{L_1}}|,
\label{x2G}
\Ee
with the property that
\[
\mbox{either}\quad \|x_{m_{L_2}}\|\leq \la_1\,\|x-\Phi\|\quad \mbox{or}\quad |T_{m_{L_1}}\cap \Ga_{m_{L_2}}|\geq 2^{L_2-2}.
\]
Again, in the first case one stops; if not one applies iteratively Theorem \ref{th3G}, with values of $k=m_{L_i}$,
$i=2,\ldots, s-1$, until some step $s$, where can one ensure that
\item there are positive integers $L_s$ and $m_{L_s}>m_{L_{s-1}}$ such that
\Be
m_{L_s}-m_{L_{s-1}}\leq \beta \sum_{j=1}^{L_s} G(2^{j-1})\mand 2^{L_s-2}\leq |T_{m_{L_{s-1}}}|,
\label{xs1G}
\Ee
 where
\Be
\begin{cases}
\mbox{either}\quad \|x_{m_{L_s}}\|\leq \la_1\,\|x-\Phi\|,\quad\\
 \mbox{or}
\quad |T_{m_{L_{s-1}}}\cap \Ga_{m_{L_s}}|\geq 2^{L_s-2}
\quad \mbox{{\bf and}}\quad m_{L_s}\geq 2\beta \,\tG(2N).
\end{cases}
\label{xsG}
\Ee
\Ei
Here $G(n)$ denotes the sequence defined in \eqref{G}, and the notation $\tG(n)$ stands for the associated summing sequence as in \eqref{tw}.

In the first case of \eqref{xsG} one stops; if not, we shall show that  the greedy algorithm actually covers the whole set $T$, that is
\Be
|T\cap \Ga_{m_{L_s}}|\geq N.
\label{TsG}
\Ee
This would imply that $x_{m_{L_s}}=0$, and so we would also stop.

Let us prove \eqref{TsG}. Here we shall use the assumption that the sequence $G(n)$ in \eqref{G} is increasing and 1-quasi-convex.
Observe that 
\Bea
|T\cap \Ga_{m_{L_s}}| & = & |T\cap \Ga_{m_{L_1}}|+|T_{m_{L_1}}\cap \Ga_{m_{L_2}}|+\ldots+|T_{m_{L_{s-1}}}\cap \Ga_{m_{L_s}}|\nonumber\\
&  \geq &  2^{L_1-2}+2^{L_2-2}+\ldots+2^{L_s-2}.\label{aux1G}
\Eea
Now, by Lemma \ref{L1} and the inductive assumptions, see \eqref{x2G}, for each $i=1,\ldots,s$, we have
\Be
2\beta\,\tG(2^{L_i-1}) > \beta\sum_{j=0}^{L_i-1}G(2^j)=\beta\sum_{j=1}^{L_i}G(2^{j-1})\geq m_{L_i}-m_{L_{i-1}},
\label{aux2G}
\Ee
with the notation $m_{L_0}=0$. Thus, applying the (non-decreasing) function $2\beta \tG(2\cdot)$ to both sides of \eqref{aux1G}
and using part b) of Lemma \ref{L2} we obtain
\Beas
2\beta\,\tG\big(2|T\cap \Ga_{m_{L_s}}|\big)& \geq & 2\beta\,\tG(2^{L_1-1}+2^{L_2-1}+\ldots+2^{L_s-1})\\
& \geq & 2\beta\,\Big(\tG(2^{L_1-1})+\tG(2^{L_2-1})+\ldots+\tG(2^{L_s-1})\Big)\\
& > & m_{L_s}\,\geq\, 2\beta\tG(2N),
\Eeas
using in the last line \eqref{aux2G} and the second assertion in \eqref{xsG}. Since $\tG$ is increasing this implies
\[
|T\cap \Ga_{m_{L_s}}|>N,
\]
which proves \eqref{TsG}. 

Thus, the process will indeed end after $m_{L_s}$ iterations. We now estimate this number using the remaining conditions
in \eqref{xs1G}.
Since the last inequality in \eqref{xsG} occurs for the first time at step $s$, we must have
\[
m_{L_{s-1}}<2\beta\tG(2N).
\]
Thus, 
\Beas
m_{L_s} & = & m_{L_{s-1}}\,+\,\big(m_{L_s}-m_{L_{s-1}}\big)\\
& \leq &  2\beta \tG(2N)\,+\,\big(m_{L_s}-m_{L_{s-1}}\big)\\
& \leq &  2\beta \tG(2N)\,+\,\beta\,\sum_{j=1}^{L_s} G(2^{j-1})\\
& \leq &  2\beta \tG(2N)\,+\,2\beta\,\tG(2^{L_s-1})\,\leq\,4\beta\,\tG(2N).
\Eeas
Therefore, using also part a) of Lemma \ref{L2}, we see that \eqref{xNG} will be true with 
\[
\oldphi(N)=m_{L_s}\leq 4\beta\,G(2N)\,=\,8\,\ln\Big[\frac{8k_N(1+\la_1)}{\sqrt{\la_1}-1}\Big]\,G(2N),
\]
as asserted in \eqref{phiHG}.
\ProofEnd

\section{An aplication: WCGA in $L^p(\log\,L)^\al$ spaces}
\setcounter{equation}{0}\label{S_Lpal}

\subsection{Property $\D(Q)$ in $L^p(\log\,L)^\al$}

In this section we shall apply Theorem \ref{th_newG} in the case when 
\[
\SX=L^p(\log\,L)^\al,\quad \mbox{$1<p<\infty$, $\al\in\SR$.}
\]
Following \cite[Definition IV.6.11]{BS}, this is the set of all measurable $f:\SR^d\to\SC$ such that
\[
\Big\|f(x)\,\big|\log(2+|f(x)|)|^\al\Big\|_{L^p(\SR^d)}<\infty.
\]
These classes satisfy the elementary inclusions
\[
L^p(\log L)^{|\al|}\subset L^p\subset L^p(\log L)^{-|\al|}.
\]
We shall regard $\SX$ as an Orlicz space $L^\Phi$ associated with the function
\Be
\Phi(t)=\int_0^t s^{p-1}\,\big(\log(c+s)\big)^{\al p}, \quad t\geq0,
\label{Phipa}
\Ee
which for 
a sufficiently large $c>1$ is a (smooth) Young function\footnote{A Young function is a convex, non-decreasing function 
$\Phi:[0,\infty)\to[0,\infty]$ such that $\Phi(0^+)=0$ and $\lim_{t\to+\infty}\Phi(t)=+\infty$. See \cite[\S I.3]{RR} or 
\cite[\S IV.8]{BS} for properties and various equivalent definitions.}.
The corresponding (Luxemburg) norm is then defined by
\[
\|f\|_{L^\Phi}=\inf\Big\{\la\mid \int_{\SR^d}\Phi(|f(x)|/\la)\,dx\leq 1\Big\}.
\]
Let $\Psi$ be the complementary function\footnote{The complementary function of a Young function $\Phi(t)$
is defined by $\Psi(s)=\sup_{t\geq0}\big(ts-\Phi(t)\big)$. See \cite[\S I.3]{RR} or 
\cite[\S IV.8]{BS} for properties and other equivalent definitions.} of $\Phi(t)$. Then
it is known that $(L^\Phi)^*=L^\Psi$ (isometrically,
when the latter space is endowed with the Orlicz norm); see \cite[Corollary IV.8.15]{BS}.
In these examples it is not difficult to check that 
\Be
\Phi(t)\,\approx\, t^p\,\big(\log(e+t)\big)^{\al p}, \quad t\geq0,
\label{Phita}
\Ee
and 
\Be
\Psi(t)\,\approx \,t^{p'}\,\big(\log(e+t)\big)^{-\al p'}, \quad t\geq0;
\label{LPsi}
\Ee
see e.g. \cite[Theorem I.7.2]{KR61}.

We recall how the norming functional $F_f$ of a (normalized) element 
$f\in L^\Phi$ is defined; see \cite[Theorem 18.5]{KR61}.
Let $P(z)$, $z\in\SC$, be an extension of $P(t)=\Phi'(t)$, $t\geq0$, such that $\sign(zP(z))=1$, $\forall z\not=0$.
Then  $F_f$ is explicitly given by
\[
F_f(g)=\frac{\Ds\int_{\SR^d} P(f(x))\,g(x)\,dx}{\Ds\int_{\SR^d} P(f(x))\,f(x)\,dx}.
\] 
In our case of interest we will have $P(z)=|z|^{p-2}\,\bar{z}\,\big(\log(c+|z|)\big)^{\al p}$, $z\in\SC$, and hence
\Be
F_f(g)=\tfrac1{A(f)}\int_{\SR^d}|f(x)|^{p-2}\,\overline{f(x)}\,g(x)\,\big(\log(c+|f(x)|)\big)^{\al p}\,dx,
\label{Ff}
\Ee
with $A(f)=\int_{\SR^d}|f|^{p}\,(\log(c+|f|))^{\al p}\,dx$.

\

The moduli of smoothness and convexity for Orlicz spaces $L^\Phi$
have been studied in \cite{MT75, Fi76}. According to \cite[Theorem 1]{MT75}, there exists a Young function $\bar\Phi$, equivalent to $\Phi$,
such that 
\Be
\rho_{L^{\bar\Phi}}(t)\lesssim \sup_{u\in[s,1],\,v>0}\frac{t^2\,\Phi(uv)}{u^2\,\Phi(v)}
\mand \dt_{(L^{\bar\Phi})^*}(s)\gtrsim \inf_{u\in[s,1],\,v>0}\frac{s^2\,\Psi(uv)}{u^2\,\Psi(v)}
\label{rhodt}
\Ee
under suitable doubling conditions in $\Phi(t)$ and $\Psi(t)$ (which are always held in the cases 
considered in \eqref{Phita} and \eqref{LPsi}). 
Moreover, our specific examples satisfy the regularity conditions stated in \cite[Proposition 19]{Fi76},
so one may actually take $\bar\Phi=\Phi$.
 
Therefore, inserting into \eqref{rhodt} the expressions for $\Phi$ and $\Psi$ from \eqref{Phita} and \eqref{LPsi},
and performing some straightforward computations, one obtains the following result.
Here we use the standard notation $\al=\al_+-\al_-$, where 
\[
\al_+=\max\{\al,0\}\mand \al_-=\max\{-\al,0\}.
\]

\begin{proposition}\label{P_rhodt}
Let $1<p<\infty$ and $\al\in\SR$, and let $\SX=L^p(\log L)^\al$ as above.
Then, for the Luxemburg norm associated with $\Phi(t)$ in $\SX$ it holds
\Bi
\item if $p>2$ then
\[
\rho_{\SX}(t)\,\lesssim\, t^2\mand \dt_{\SX^*}(s)\,\gtrsim \,s^2,
\]
\item if $1<p\leq 2$ then 
\[
\rho_{\SX}(t)\,\lesssim \,
t^p\,(\log (e+\tfrac1t))^{p\,\al_-}\mand
\dt_{\SX^*}(s)\,\gtrsim \, \frac{s^{p'}}{(\log (e+\frac1s))^{p'\,\al_-}}.
\]
\Ei
%
\end{proposition}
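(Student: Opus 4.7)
The plan is to prove Proposition \ref{P_rhodt} by direct substitution of the explicit expressions \eqref{Phita} and \eqref{LPsi} for $\Phi$ and $\Psi$ into the Maleev-Troyanski-Figiel formulas \eqref{rhodt}. A short calculation gives
\[
\frac{\Phi(uv)}{u^2\,\Phi(v)}\,\approx\,u^{p-2}\Big(\tfrac{\log(e+uv)}{\log(e+v)}\Big)^{\al p}
\mand
\frac{\Psi(uv)}{u^2\,\Psi(v)}\,\approx\,u^{p'-2}\Big(\tfrac{\log(e+uv)}{\log(e+v)}\Big)^{-\al p'}.
\]
It therefore suffices to compute the supremum (for $\rho_{\SX}(t)$) and the infimum (for $\dt_{\SX^*}(s)$) of these expressions over the admissible ranges of $(u,v)$, and multiply respectively by $t^2$ and $s^2$.

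For $\rho_{\SX}(t)$ I would split according to the sign of $p-2$. When $p>2$, the factor $u^{p-2}\leq 1$ dominates: if $\al\geq 0$ the log ratio is also $\leq 1$, raised to a positive power, giving a trivial $O(1)$ bound; if $\al<0$ the potentially unbounded $(\text{log ratio})^{\al p}$ is balanced against $u^{p-2}$ at the critical scale $uv\approx 1$, where $u^{p-2}(\log(1/u))^{p|\al|}$ attains a finite maximum at a fixed interior $u$. Either way $\rho_{\SX}(t)\lesssim t^2$. When $1<p\leq 2$, $u^{p-2}$ is maximized at the smallest admissible $u$, producing a factor of $t^{p-2}$ which, combined with $t^2$, yields $t^p$. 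An extra log factor $(\log(e+\tfrac1t))^{p|\al|}$ emerges only when $\al<0$, obtained by testing $v\approx 1/t$; this precisely reproduces the exponent $p\al_-$ in the statement.

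For $\dt_{\SX^*}(s)$ the argument is parallel, with $p'$ in place of $p$, the sign of $\al$ flipped, and infima replacing suprema. When $p>2$ (so $p'<2$), the factor $u^{p'-2}\geq 1$ attains its infimum at $u=1$, where the log ratio equals $1$, giving a lower bound $\gtrsim 1$ and hence $\dt_{\SX^*}(s)\gtrsim s^2$. When $1<p\leq 2$ (so $p'\geq 2$), the infimum of $u^{p'-2}$ is attained at $u=s$, producing $s^{p'-2}$, and only for $\al<0$ does the log factor reduce the infimum further, by a multiplicative $(\log(e+\tfrac1s))^{-p'|\al|}$ at the critical scale $v\approx 1/s$. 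The only delicate step throughout is the identification of this extremizing pair $(u,v)$ in the unfavorable sign combinations; once these critical scales are pinned down, the remaining bounds reduce to elementary comparisons between powers of $u$, $t$ (or $s$), and the relevant logarithm.
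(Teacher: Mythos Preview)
Your approach is exactly what the paper does: it explicitly says the proposition follows by ``inserting into \eqref{rhodt} the expressions for $\Phi$ and $\Psi$ from \eqref{Phita} and \eqref{LPsi}, and performing some straightforward computations,'' and gives no further detail. Your sketch fills in those computations correctly; the only spot to tighten is the case $p>2$, $\al<0$ for $\dt_{\SX^*}$, where the infimum is not literally attained at $u=1$ but, by the same balance argument you used for $\rho_\SX$ (namely that $u^{p'-2}/(\log(e+1/u))^{|\al|p'}$ has a positive lower bound on $(0,1]$), is still bounded below by a constant independent of $s$.
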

In view of the discussion in \S\ref{S_DQ}, we then obtain the following.

\begin{corollary}\label{C_Q}
Let $1<p<\infty$ and $\al\in\SR$, and let $\SX=L^p(\log L)^\al$ as above.
Then, for every normalized dictionary $\cD$ in $\SX$, property $\D(Q)$ holds with 
\Be
Q(s):=\,\begin{cases} \frac{c_0\,s^{p'}}{(\log (e+\frac1s))^{p'\,\al_-}} & \mbox{if {\small $1<p\leq 2$}}\\
c_0\,s^2 & \mbox{if {\small $p>2$,}}
\end{cases}
\label{dtstar}
\Ee
for a suitably small constant $c_0>0$.
\end{corollary}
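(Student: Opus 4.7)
The plan is to combine Proposition \ref{P_dt} with the concrete estimates in Proposition \ref{P_rhodt}. From the latter, $\rho_\SX(t)\lesssim t^2$ when $p>2$ and $\rho_\SX(t)\lesssim t^p(\log(e+1/t))^{p\al_-}$ when $1<p\leq 2$, so in both cases $\rho_\SX(t)=o(t)$ as $t\to 0^+$. Thus $\SX=L^p(\log L)^\al$ is uniformly smooth, and Proposition \ref{P_dt} applies: every normalized dictionary $\cD$ in $\SX$ satisfies $\D(Q_0)$ with $Q_0(s)=2\tdt_{\SX^*}(s)$.

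Next I would convert the lower bound on $\dt_{\SX^*}$ supplied by Proposition \ref{P_rhodt} into a matching lower bound on $\tdt_{\SX^*}$. The Figiel equivalence recalled in \S\ref{S_DQ} gives $\tdt_{\SX^*}(s)\geq \dt_{\SX^*}(s/2)$. In the case $p>2$ this immediately yields $\tdt_{\SX^*}(s)\gtrsim (s/2)^2\gtrsim s^2$. In the case $1<p\leq 2$ one obtains
\[
\tdt_{\SX^*}(s)\,\geq\,\dt_{\SX^*}(s/2)\,\gtrsim\,\frac{(s/2)^{p'}}{(\log(e+2/s))^{p'\al_-}}\,\gtrsim\,\frac{s^{p'}}{(\log(e+1/s))^{p'\al_-}},
\]
where the factor of $2$ inside the logarithm is absorbed into the implicit constant using $\log(e+2/s)\leq 2\log(e+1/s)$ on the relevant range.

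Finally, I would invoke the trivial monotonicity of property $\D$ in $Q$: if \eqref{DQ} holds for some $Q_0$ and $Q$ is a positive increasing function with $Q(0)=0$ and $Q\leq Q_0$ on the range of $|F_f(\phi)|$, then $\D(Q)$ holds as well. Since $\cD$ is normalized and $\|F_f\|_{\SX^*}=1$, we always have $|F_f(\phi)|\in[0,1]$, so it suffices to pick $c_0>0$ small enough that the function $Q$ in \eqref{dtstar} is pointwise bounded by $2\tdt_{\SX^*}(s)$ on $[0,1]$. A short check verifies that this $Q$ is itself increasing on $(0,1]$: $s^{p'}$ is increasing, and $(\log(e+1/s))^{-p'\al_-}$ is increasing in $s$ whenever $\al_-\geq 0$.

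The argument is essentially a bookkeeping exercise, and I do not anticipate a substantial obstacle. The only mildly technical point is making sure the passage from $\dt_{\SX^*}(s/2)$ to a lower bound with the same log factor (in $s$ rather than $2/s$) is handled correctly; this is where the restriction to small $s$ and the choice of a small absolute constant $c_0$ are used.
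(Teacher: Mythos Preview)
Your proposal is correct and is exactly the argument the paper has in mind: the corollary is stated immediately after Proposition~\ref{P_rhodt} with only the phrase ``in view of the discussion in \S\ref{S_DQ}'' as justification, which amounts to combining Proposition~\ref{P_dt}, the Figiel inequality $\tdt_{\SX^*}(s)\geq \dt_{\SX^*}(s/2)$, and the lower bounds on $\dt_{\SX^*}$ from Proposition~\ref{P_rhodt}. Your write-up makes explicit the uniform smoothness check, the absorption of the factor $2$ inside the logarithm, and the monotonicity step passing from $Q_0$ to a smaller $Q$, all of which are routine and implicit in the paper.
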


\subsection{The Haar system in $L^p(\log\,L)^\al$}

Next we consider the dictionary $\cD=\{\psi_j\}_{j=1}^\infty$ in $\SX=L^p(\log L)^\al$ given by the normalized Haar basis in $\SR^d$
(or any sufficiently smooth wavelet basis). These bases are unconditional, so in this case property $\Au(k_N,D)$
will hold with $k_N=O(1)$ (and any $D<\infty$). It remains to verify property $\At(H,D)$. As mentioned
in Lemma \ref{L_A3H}, this property holds with
\Be
H(N)=\sup_{|A|\leq N, |\e_j|=1}\,\big\|\sum_{j\in A} \e_j\psi^*_j\big\|_{\SX^*}
\label{HN1}
\Ee
(also for all $D<\infty$). Here $\{\psi^*_j\}$ is the dual dictionary, which is again the Haar basis, this
time normalized in $\SX^*$. Since the basis is unconditional, the parameter $H(N)$ is equivalent to the
\emph{upper democracy function} of the dual space $\SX^*$, that is
\[
H(N)\approx h_{\SX^*}(N):=\sup_{|A|\leq N} \big\|\sum_{j\in A} \psi^*_j\big\|_{\SX^*}.
\]
Democracy functions, for the Orlicz classes $L^\Phi$, were studied in \cite{GHM08},
where it was proved that, if the Boyd indices of $\Phi$ are non trivial, then
\[
h_{L^\Phi}(N)\approx \sup_{s>0}\frac{\phi(Ns)}{\phi(s)},
\]
where $\phi(t):=1/\Phi^{-1}(1/t)$ is the fundamental function of $L^\Phi$.

In our case of interest, where $\Phi(t)$ satisfies \eqref{Phita}, we have
\[
\phi(t)\approx t^{1/p}\,\big(\log(e+\tfrac1t)\big)^{\al}\mand
h_{L^\Phi}(N)\approx N^{1/p}\,\big(\log(e+N)\big)^{\al_-};
\]
see \cite[Proposition 3.4]{GHM08}. Thus, for the dual space $\SX^*=L^\Psi$ with $\Psi$ as in \eqref{LPsi}
we have
\[
h_{L^\Psi}(N)\approx N^{1/p'}\,\big(\log(e+N)\big)^{\al_+}.
\]
Overall we conclude that property $\At(H)$ holds with
\Be
H(N)\,\approx \, N^{1/p'}\,\big(\log(e+N)\big)^{\al_+}.
\label{Hpsi}
\Ee
Thus, combining \eqref{dtstar} and \eqref{Hpsi}, we see that, for $p>2$ we have
\[
G(N)=\frac1{Q(\frac {c(\tau)}{H(N)})} \,\approx\,H(N)^2\,\approx\,N^\frac2{p'}\,\big(\log(e+N)\big)^{2\al_+}
\]
while for $1<p\leq 2$ we have
\[
G(N)=\frac1{Q(\frac {c(\tau)}{H(N)})} \,\approx\,H(N)^{p'}\,\Big(\log(e+H(N))\Big)^{p'\al_-}\,
\approx\,N\,\big(\log(e+N)\big)^{p'|\al|}.
\]

\subsection{Proof of Theorem \ref{th_Lpal}.a}
Collecting the values of the parameters $G(N)$ obtained
in the previous subsection, and inserting them into Theorem \ref{th_newG}, we deduce the 
first assertions \eqref{foldphiN} and \eqref{phiLpal} in Theorem \ref{th_Lpal}.


\subsection{Proof of Theorem \ref{th_Lpal}.b} 
\setcounter{footnote}{0}

We shall use the following result whose proof can be found in \cite[Lemma 3.1]{GHM08}. 
For simplicity in the notation, we assume in this section
that the underlying space $\SR^d$ has dimension $d=1$.

\begin{lemma}\label{L_ghm}
Let $\SX=L^\Phi(\SR)$ be an Orlicz space with non-trivial Boyd indices, and let $\cD=\{h_I\}$ be the (normalized) Haar basis in $\SX$.
Then, if $A$ is a finite collection of disjoint dyadic intervals with the same size $s$, then
\[
\big\|\sum_{I\in A} h_I\big\|_{L^\Phi} \,\approx\, \frac{\phi(|A|s)}{\phi(s)},
\]
where $\phi(t)$ is the fundamental function of $\SX$.
\end{lemma}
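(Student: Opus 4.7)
The plan is to prove the stated equivalence by an explicit computation that exploits the disjointness of Haar supports at a common scale. First I would pin down the normalization: writing the unnormalized ``indicator-style'' Haar function as $h_I^{(0)}=\chi_{I_-}-\chi_{I_+}$, where $I_\pm$ are the two dyadic halves of $I$, one has $|h_I^{(0)}|=\chi_I$, and hence, since the Luxemburg norm depends only on $|f|$,
\[
\|h_I^{(0)}\|_{L^\Phi}\;=\;\|\chi_I\|_{L^\Phi}\;=\;\phi(|I|),
\]
by definition of the fundamental function $\phi$ of $L^\Phi$. Therefore the $\SX$-normalized Haar system consists of $h_I=h_I^{(0)}/\phi(|I|)$, up to a uniform constant that accounts for the possible choice between Luxemburg and Orlicz norms.

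Second, for a finite collection $A$ of disjoint dyadic intervals of common length $s$, I would factor the normalization outside the sum, so that
\[
\sum_{I\in A}h_I\;=\;\frac{1}{\phi(s)}\sum_{I\in A}h_I^{(0)}.
\]
Since the supports of the $h_I^{(0)}$ are pairwise disjoint, the pointwise identity
\[
\Big|\sum_{I\in A}h_I^{(0)}\Big|\;=\;\sum_{I\in A}\chi_I\;=\;\chi_E,\qquad E=\bigcup_{I\in A}I,
\]
holds, with $|E|=|A|\,s$. Applying the norm and using again that $\|\,\cdot\,\|_{L^\Phi}$ depends only on the absolute value gives
\[
\Big\|\sum_{I\in A}h_I^{(0)}\Big\|_{L^\Phi}\;=\;\|\chi_E\|_{L^\Phi}\;=\;\phi(|A|\,s),
\]
and dividing by $\phi(s)$ yields the target estimate
\[
\Big\|\sum_{I\in A}h_I\Big\|_{L^\Phi}\;\approx\;\frac{\phi(|A|\,s)}{\phi(s)}.
\]

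Third, the role of the hypothesis of non-trivial Boyd indices is to ensure that $\phi(t)=1/\Phi^{-1}(1/t)$ is the correct expression for the fundamental function up to constants, that $\phi$ is doubling, and that the Luxemburg and Orlicz norms are equivalent on $L^\Phi$; this is precisely what converts the naive equality of the previous paragraph into the claimed $\approx$ when the dictionary $\cD$ is normalized only up to a uniform constant. The main (minor) obstacle is simply bookkeeping the normalization convention and the Luxemburg-versus-Orlicz constant; once that is fixed, the proof reduces to the one-line disjoint-support identity above.
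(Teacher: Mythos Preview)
The paper does not supply its own proof of this lemma; it simply quotes the result and refers to \cite[Lemma 3.1]{GHM08}. Your direct computation is correct and is in fact the natural argument: once the intervals in $A$ are pairwise disjoint and of the same length $s$, the identity $\bigl|\sum_{I\in A}h_I^{(0)}\bigr|=\chi_E$ with $|E|=|A|\,s$ reduces everything to the definition of the fundamental function, and division by $\phi(s)$ gives the claimed expression.

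One small remark: as you have written it, the computation actually yields an \emph{equality}
\[
\Big\|\sum_{I\in A}h_I\Big\|_{L^\Phi}=\frac{\phi(|A|\,s)}{\phi(s)},
\]
with no need for the non-trivial Boyd indices hypothesis at this stage. That hypothesis is not used in your argument, and it is not needed for this particular statement about disjoint equal-size intervals; it is carried over from the more general context of \cite{GHM08} (where it guarantees, among other things, that the Haar system is an unconditional basis in $L^\Phi$, a fact used elsewhere in the paper). Your third paragraph therefore overstates its role here; you may simply drop it, or note that the $\approx$ becomes $=$ once the normalization $h_I=h_I^{(0)}/\phi(|I|)$ is fixed.
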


We now show the lower bound for the function $\psi(N)$ 
stated in \eqref{fpsiN}.



\begin{proof}[Proof of \eqref{fpsiN}.] 
Write $\cD=\{h_I\}$ where $h_I$ is the (normalized) Haar function supported in $I$, and $I$ runs over all dyadic 
intervals
 in $\SR$.
Pick any two collections $A$ and $B$, of pairwise disjoint dyadic intervals with cardinalities $|A|=N$ and $|B|=M$, such that
\[
|I|=1\quad \mbox{if $I\in A$,}\mand |I|=1/M\quad \mbox{if $I\in B$.}
\]
For instance, we could take 
\[
A=\{[n,n+1)\mid n=1,\ldots, N\}\mand B=\{J\subset[0,1)\mid |J|=2^{-m}\},
\]
with $M=2^m$. For $b>0$ to be determined, consider the function
\Be
f=f_1+f_2=\sum_{I\in A}h_I+b\sum_{I\in B}h_I.
\label{f12}
\Ee
Using Lemma \ref{L_ghm} and $\phi(t)\,\approx\,t^{1/p}\,\big(\log(e+\frac1t)\big)^\al$, observe that 
\[
\|f_1\|\approx \phi(N)\approx N^{1/p},\mand \|f_2\|\approx \frac b{\phi(1/M)}\approx \frac{b\, M^\frac1p}{(\log(e+M))^\al}.
\]
Also, since $\|h_I\|_{L^\Phi}=1$ we have
\Be
|h_I(x)|=\frac1{\phi(|I|)}\bone_I(x).
\label{faux0}
\Ee
In particular, if $I\in B$ we have
\Be
|f(x)|=b\,|h_I(x)|=\frac b{\phi(1/M)}\approx \|f_2\|\lesssim \|f\|,\quad x\in I,
\label{faux1}
\Ee
and similarly, if $I\in A$ we have
\Be
|f(x)|=1\lesssim \|f_1\|\lesssim \|f\|, \quad x\in I.
\label{faux2}
\Ee
Using the formula for the norming functional in \eqref{Ff} we see that
\[
|F_f(h_I)|=\,\frac1{C(f)}\,\times\,\begin{cases}\Ds
\int |h_I|^p\,\Big(\log(c+|f(x)|/\|f\|)\Big)^{\al p}\,dx, & \quad I\in A,\\
\Ds{b^{p-1}}\int |h_I|^p\,\Big(\log(c+|f(x)|/\|f\|)\Big)^{\al p}\,dx, & \quad I\in B,\\
0, & \quad I\not\in A\cup B.
\end{cases}
\]
In view of \eqref{faux1} and \eqref{faux2}, the logarithmic factors inside the integrals are approximately constant,
so can be disregarded.
Also, \eqref{faux0} implies
\[
\int |h_I|^p\,dx \,=\, \frac{|I|}{\phi(|I|)^p}\,\approx\,\frac1{\big(\log(e+|I|^{-1})\big)^{\al p}},
\]
so we have
\Be
|F_f(h_I)|\,\approx\,\tfrac1{C(f)}, \;\;\mbox{$I\in A$},\mand 
|F_f(h_I)|\,\approx\,\tfrac1{C(f)} \,\frac{b^{p-1}}{\big(\log(e+M)\big)^{\al p}},\;\;\mbox{$I\in B$}.
\label{faux3}
\Ee
Thus, the above quantities are approximately the same provided we choose
\Be
\label{bM}
b\,=\,c_1\,\big(\log(e+M)\big)^{\al p'}.
\Ee
Therefore, if $c_1>0$ is chosen properly, the WCGA, $\G_n(f)$, can be formed either by selecting consecutive elements $I$ from $A$
(if $n\leq N$), 
or by selecting consecutive elements $I$ from $B$ (if $n\leq M/2$). To verify these assertions one should note that the equivalences in \eqref{faux3}
remain also true\footnote{In the latter case, we have restricted to $n\leq M/2$ to ensure that
\eqref{faux1} continues to hold when $f$ is replaced by $f-\G_n(f)$. Indeed, in such case 
one would use that $\|\sum_{I\in B'}h_I\|\approx 1/\phi(1/M)$, when $B'\subset B$ with $|B'|\geq M/2$, by Lemma \ref{L_ghm}.}
 when $f$ is replaced by the remainder $f-\G_n(f)$.

\

So suppose now that \eqref{fpsiN} holds. If $\al\geq0$, we let $N=\psi(M)$, and in view of the previous comment we can select $c_1$
such that $\G_N(f)=f_1$. Then
\[
\|f_2\|=\|f-\G_{\psi(M)}(f)\|\leq 2\sigma_M(f)\leq \|f_1\|,
\]
which in view of \eqref{bM} and \eqref{f12} implies 
\[
c_1^p\,M\,(\log(e+M))^{\al p'}\,=\,\frac{b^p\, M}{(\log(e+M))^{\al p}}\,\approx \,\|f_2\|^p\lesssim \|f_1\|^p\approx N=\psi(M).
\]
This proves the assertion in the Theorem when $\al\geq0$.

If $\al\leq0$, then we take $M=2\psi(N)$, and select $c_1$
such that $\G_{M/2}(f)=b\sum_{I\in B'}h_I$, for some $B'\subset B$ with $|B'|=M/2$. Then
\[
\|f_1\|\lesssim\|f_1+b\sum_{I\in B\setminus B'}h_I\|=\|f-\G_{\psi(N)}(f)\|\leq 2\sigma_N(f)\leq \|f_2\|,
\]
which this time implies 
\[
N\approx \|f_1\|^p\lesssim \|f_2\|^p\approx \,M\,(\log(e+M))^{\al p'}\,.
\]
Solving for $M$ this gives
\[
\psi(N)=M/2\,\gtrsim\,\frac{N}{(\log(e+N))^{\al p'}}\,=\,N\,\big(\log(e+N)\big)^{|\al|\,p'}.
\]
This establishes \eqref{fpsiN}, and therefore completes the proof of Theorem \ref{th_Lpal}.
\end{proof}

\section{WCGA for trigonometric system in $L^p(\log\,L)^\al$}
\setcounter{equation}{0}\label{S_trig}

In this section we give a second application of Theorem \ref{th_newG},
this time to the trigonometric system in the torus $\ST\equiv[-\pi,\pi)$, that is,
\[
\cD=\cT:=\{e^{inx}\}_{n\in\SZ}.
\]
So, from now on, all functions $f\in L^p(\log\,L)^\al$ are understood as defined in $\ST$. Otherwise, we regard $L^p(\log\,L)^\al$ as 
an Orlicz space $L^\Phi$ in the same sense as in \S\ref{S_Lpal}. 
Since \cite{MT75} covers also this setting, the estimates for the moduli of convexity and smoothness
in Proposition \ref{P_rhodt} remain true, and so does the estimate \eqref{dtstar} for the function $Q(s)$ in Corollary
\ref{C_Q}. 

\

We still have to compute the parameters $k_N$ and $H(N)$. To do so, we shall make use of the following interpolation lemma.

\begin{lemma}\label{L_p22}
Consider the Young function $\bPhi(t)=t^p\,\big(\log(c+t)\big)^\al$, for some $c\geq e$.
Assume that 
\Be
2<p<\infty\;\;\mbox{and}\;\;\al\in\SR,\quad\mbox{or}\quad p=2\;\;\mbox{and}\;\;\al\geq0.
\label{p22}
\Ee 
Then, 
\Be
\label{interpol}
\|f\|_{L^\bPhi}\leq \|f\|_\infty^{1-\frac2p}\,\Big(\log\big(c+\big[\tfrac{\|f\|_\infty}{\|f\|_2}\big]^\frac p2\big)\Big)^\al\,\|f\|_2^{\frac 2p},
\quad\quad \forall\,f\in L^\infty(\ST).
\Ee
\end{lemma}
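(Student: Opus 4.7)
The plan is to verify the bound directly from the Luxemburg definition. Setting $\lambda$ to be the right-hand side of \eqref{interpol} and writing $M=\|f\|_\infty$, $S=\|f\|_2$, $R=M/S$, it is enough to show that $\int_\ST \bPhi(|f|/\lambda)\,dx\leq 1$.

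The key ingredient is an $L^2$--$L^\infty$ interpolation inequality at the level of the Young function. Specifically, under the hypothesis \eqref{p22} (and for $c\geq e$ sufficiently large in the case $p>2$, $\al<0$), the function $t\mapsto \bPhi(t)/t^2 = t^{p-2}(\log(c+t))^\al$ is non-decreasing on $(0,\infty)$. This is obvious when $\al\geq 0$; when $\al<0$ and $p>2$, the derivative
\[
\frac{d}{dt}\Big[t^{p-2}(\log(c+t))^\al\Big]\,=\,t^{p-3}(\log(c+t))^{\al-1}\!\left[(p-2)\log(c+t)-\frac{|\al|\,t}{c+t}\right]
\]
has a bracket whose unique critical value over $t>0$ is positive once $c$ is chosen large enough in terms of $p$ and $|\al|$. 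Monotonicity then yields the pointwise bound $\bPhi(t)\leq (\bPhi(T)/T^2)\,t^2$ for $0\leq t\leq T$. Applying this with $T=M/\lambda$ and integrating gives
\[
\int_\ST\bPhi(|f|/\lambda)\,dx\,\leq\,\frac{\bPhi(M/\lambda)}{(M/\lambda)^2}\cdot\frac{\|f\|_2^2}{\lambda^2}\,=\,\frac{\bPhi(M/\lambda)}{R^2},
\]
so the goal reduces to verifying the single-variable inequality $\bPhi(M/\lambda)\leq R^2$.

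Inserting the explicit value of $\lambda$ and writing $\mu=M/\lambda=R^{2/p}(\log(c+R^{p/2}))^{-\al}$, the required inequality unfolds to
\[
\bigl(\log(c+\mu)\bigr)^\al\,\leq\,\bigl(\log(c+R^{p/2})\bigr)^{\al p},
\]
which I would verify by comparing $\mu$ and $R^{p/2}$ (using $p\geq 2$, and, in the regime where $R$ is small, the trivial embedding $L^\infty(\ST)\hookrightarrow L^2(\ST)$ which guarantees $R\geq(2\pi)^{-1/2}$), together with $\log(c+R^{p/2})\geq 1$ from $c\geq e$. The main obstacle I foresee is the case $p>2$, $\al<0$: both the monotonicity of $\bPhi(t)/t^2$ and the final algebraic comparison require the logarithmic inequalities to run in the direction opposite to the $\al\geq 0$ regime, and each forces $c$ to be chosen suitably large in terms of $p,\al$. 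Keeping this choice of $c$ compatible with the general convention that $\bPhi$ is a Young function is the delicate point, but it only affects absolute constants.
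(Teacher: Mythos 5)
Your approach is valid and related in spirit to the paper's, but organized differently. The paper factors $f = a(f)\,b(f)$ with $a(t)=t^{2/p}(\log(c+t^{2/p}))^{-\al}$ and $b(t)=t/a(t)$, uses the $L^\infty$--$L^{\bPhi}$ H\"older inequality $\|fg\|_{L^\bPhi}\leq\|g\|_\infty\|f\|_{L^\bPhi}$, and then checks the Luxemburg modular of $a(f)$ directly from the pointwise bound $\bPhi(a(t))\leq t^2$; that last bound holds regardless of the sign of $\al$ because $(\log(c+\,\cdot\,))^{-\al}$ is $\leq 1$ or $\geq 1$ precisely as needed. You instead estimate the modular of $f/\lambda$ directly, using that $\bPhi(t)/t^2$ is non-decreasing, and reduce to a single scalar inequality. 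These two devices are essentially equivalent (monotonicity of $\bPhi(t)/t^2$ is the same as monotonicity of the paper's $b$, up to the substitution $t\mapsto t^{p/2}$), and both force $c$ to be chosen large in terms of $p,\al$ when $p>2$, $\al<0$, as you correctly note; the paper absorbs that requirement into its standing convention on $c$ in \eqref{Phipa}.

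Two points to flag. First, the displayed reduced inequality has mismatched exponents: from $\bPhi(\mu)\leq R^2$ with $\mu^p=R^2(\log(c+R^{p/2}))^{-\al p}$ one gets $(\log(c+\mu))^{\al p}\leq(\log(c+R^{p/2}))^{\al p}$, i.e.\ the exponent should be $\al p$ on \emph{both} sides (or $\al$ on both after taking $p$-th roots), not $\al$ on the left and $\al p$ on the right. Second, the final comparison of $\mu$ with $R^{p/2}$ is more delicate than it needs to be: for $p>2$ the two exponents $2/p$ and $p/2$ differ, so when $R<1$ (which does happen on $\ST$, since only $R\geq(2\pi)^{-1/2}$ is guaranteed) the inequality $\mu\lessgtr R^{p/2}$ is not immediate from $\log(c+\,\cdot\,)\geq 1$. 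In fact the paper's own derivation produces $\log(c+R^{2/p})$ rather than $\log(c+R^{p/2})$ in the conclusion (and it is the $2/p$ version that is used later, in \eqref{bbt} and Lemma \ref{L_demT}); with that exponent your reduced inequality becomes the trivial observation that $(\log(c+R^{2/p}))^{\mp\al}\leq 1$ exactly when $\pm\al\geq 0$, valid for all $R>0$. So the remaining friction you anticipate in the $R$-small regime is an artifact of the apparent $p/2$--$2/p$ typo in the statement, not a genuine obstruction to your method.
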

\begin{proof}
We may assume that $\|f\|_2=1$. Define the functions
\Be
\label{abt}
a(t)=t^\frac 2p\,\big(\log(c+t^\frac 2p)\big)^{-\al}\mand b(t)=\frac t{a(t)}=
t^{1-\frac 2p}\,\big(\log(c+t^\frac 2p)\big)^{\al}, \quad t>0.
\Ee
By the lattice property of the Luxemburg norm in $L^\bPhi$ we have
\[
\|f\|_{L^\bPhi}  = \big\|a(f)b(f)\big\|_{L^\bPhi}\leq \,\big\|b(f)\big\|_{L^\infty}\,\big\|a(f)\big\|_{L^\bPhi}\,
 = \,  b\big(\|f\|_\infty\big)\,\big\|a(f)\big\|_{L^\bPhi},
\]
using that $b(t)$ is increasing under the conditions in \eqref{p22}. So, it suffices to show that
\[
\int_\ST\bPhi\big(a(|f(x)|)\big)\,dx\,\leq\,1,
\] 
as this will imply that $\big\|a(f)\big\|_{L^\bPhi}\leq 1$. Write
\Beas
\int\bPhi\big(a(|f|)\big)\,dx & = & \int a(|f|)^p\,\Big[\log\big(c+a(|f|)\big)\Big]^{\al p}\,dx.
\Eeas
Observe that, regardless of the sign of $\al\in\SR$, we always have
\[
\Big[\log\big(c+a(|f|)\big)\Big]^{\al p}=\Bigg[\log\Big(c+\frac{|f|^\frac2p}{[\log(c+|f|^{2/p})]^{\al}})\Big)\Bigg]^{\al p}\leq 
\Big[\log\Big(c+|f|^\frac2p\Big)\Big]^{\al p}.
\]
Thus,
\[
\int\bPhi\big(a(|f|)\big)\,dx\,\leq\, \int a(|f|)^p\,\Big[\log\big(c+|f|^\frac2p\big)\Big]^{\al p}\,dx\,=\,\int|f|^2=1.
\]
\end{proof}

\BR
Observe that, when the indices $p$ and $\al$ satisfy \eqref{p22}, then it holds
\Be
L^p(\log L)^\al\hookrightarrow L^2(\ST).
\label{LplogL2}
\Ee
This is easily proved using that $t^2\lesssim \Phi(t)$ for $t\geq1$, since
\[
\int_\ST|f|^2=\int_{\{|f|<1\}}|f|^2+\int_{\{|f|\geq1\}}|f|^2\leq 1+c'\int\Phi(|f|)\,dx<\infty.
\]
Likewise, by duality, one proves that $L^{2}(\ST)\hookrightarrow L^p(\log L)^\al$ when
\Be
1<p<2\;\;\mbox{and}\;\;\al\in\SR,\quad\mbox{or}\quad p=2\;\;\mbox{and}\;\;\al\leq0.
\label{p11}
\Ee 
\ER

\subsection{Property $\At$ for $\cT$ in $L^p(\log L)^\al$}

\begin{lemma}\label{L_demT}
Let $1<p<\infty$ and $\al\in\SR$.
Then, for all $|\e_n|\leq 1$ and all $A\subset\SZ$ with $|A|\leq N$ it holds
\Be
\big\|\sum_{n\in A}\e_n e^{inx}\big\|_{L^p(\log L)^\al}\lesssim\max\Big\{N^{1/2}, \;N^{1-\frac1p}\,\big(\log(e+N)\big)^\al\Big\}.
\label{demA}
\Ee
\end{lemma}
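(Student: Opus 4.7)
The plan is to split the argument into two cases based on whether $(p,\al)$ satisfies \eqref{p22} or \eqref{p11}, which together cover all of $(1,\infty)\times\SR$. First I would record three elementary bounds on $f(x)=\sum_{n\in A}\e_n e^{inx}$: the triangle inequality gives $\|f\|_\infty\leq|A|\leq N$, orthogonality of $\{e^{inx}\}$ in $L^2(\ST)$ gives $\|f\|_2\leq|A|^{1/2}\leq N^{1/2}$, and Cauchy--Schwarz gives $\|f\|_\infty/\|f\|_2\leq|A|^{1/2}\leq N^{1/2}$. Since $\ST$ has finite normalized measure, also $\|f\|_2\leq\|f\|_\infty$, so writing $s:=\|f\|_\infty/\|f\|_2$ we have $s\in[1,N^{1/2}]$.

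When $(p,\al)$ satisfies \eqref{p11}, the embedding $L^2(\ST)\hookrightarrow L^p(\log L)^\al$ noted in the Remark after Lemma~\ref{L_p22} immediately yields $\|f\|_{L^p(\log L)^\al}\lesssim\|f\|_2\leq N^{1/2}$, which is at most the right-hand side of \eqref{demA}. When $(p,\al)$ satisfies \eqref{p22}, Lemma~\ref{L_p22} combined with the identity $\|f\|_\infty=s\|f\|_2$ gives
\[
\|f\|_{L^p(\log L)^\al}\,\leq\,\|f\|_\infty^{1-2/p}\,\big(\log(c+s^{p/2})\big)^\al\,\|f\|_2^{2/p}\,\leq\,N^{1/2}\,h(s),
\]
where $h(s):=s^{1-2/p}(\log(c+s^{p/2}))^\al$. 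The remaining task is to verify
\[
\sup_{s\in[1,N^{1/2}]}h(s)\;\lesssim\;\max\big\{1,\,N^{1/2-1/p}(\log(e+N))^\al\big\}.
\]
For $\al\geq0$ both factors of $h$ are non-decreasing, so the supremum is realised at $s=N^{1/2}$ and equals $N^{1/2-1/p}(\log(e+N))^\al$ up to constants. For $\al<0$ (hence $p>2$), a direct computation gives $s(\log h)'=(1-2/p)+\tfrac{\al p}{2}\,h_1(s^{p/2})$ with $h_1(u):=u/[(c+u)\log(c+u)]$; since $h_1$ is unimodal on $(0,\infty)$ with maximum at a universal point, $h$ has at most two critical points (a local max $s_1$ followed by a local min $s_2$), both at values of $s$ depending only on $(p,\al,c)$ and not on $N$. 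A short case analysis on the position of $s_1$ relative to $[1,N^{1/2}]$ shows that $\sup h$ is controlled by the maximum of the endpoint values $h(1)=O(1)$ and $h(N^{1/2})\asymp N^{1/2-1/p}(\log(e+N))^\al$.

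The main obstacle is precisely this last calculus step in the regime $\al<0$, $p>2$: the naive substitution $(\log(c+s^{p/2}))^\al\leq(\log(c+1))^\al$ together with $s^{1-2/p}\leq N^{(p-2)/(2p)}$ would only give $\|f\|_{L^p(\log L)^\al}\lesssim N^{1-1/p}$, which strictly exceeds the allowed maximum once $\al$ is sufficiently negative. The resolution is to retain the joint dependence on $s$ of both factors in $h(s)$ and exploit that $s^{1-2/p}$ and $(\log(c+s^{p/2}))^\al$ attain their suprema on $[1,N^{1/2}]$ at opposite endpoints, so that any interior extremum of their product is absorbed by a constant depending only on $(p,\al,c)$.
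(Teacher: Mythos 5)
Your argument is correct and mirrors the paper's structure: the regime \eqref{p11} is handled by the embedding $L^2\hookrightarrow L^p(\log L)^\al$, and the regime \eqref{p22} by Lemma~\ref{L_p22} together with $\|f\|_2\leq\sqrt N$ and the Cauchy--Schwarz bound $\|f\|_\infty/\|f\|_2\leq\sqrt N$. The one place you diverge is the sub-case $p>2$, $\al<0$. The paper simply asserts, in the proof of Lemma~\ref{L_p22}, that under \eqref{p22} the function $b(t)=t^{1-2/p}\,\big(\log(c+t^{2/p})\big)^\al$ is increasing; this holds as soon as the free constant $c$ in \eqref{Phipa} is chosen large enough depending on $p$ and $|\al|$, and enlarging $c$ only changes the Luxemburg norm up to equivalence, so this normalization is harmless. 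With $b$ monotone, $\sup_{[1,\sqrt N]}b$ sits at the right endpoint and the estimate is immediate. You instead keep $c$ arbitrary and analyze a possibly non-monotone $h(s)$; your observation that the critical equation $h_1(s^{p/2})=\tfrac{2(1-2/p)}{|\al|p}$ does not involve $N$, hence any interior critical values of $h$ are $O(1)$ constants, is exactly what makes your route close. (One phrase is a hair loose: ``$\sup h$ is controlled by the maximum of the endpoint values'' --- the supremum may actually be attained at the interior local maximum $s_1$, but since $h(s_1)=O(1)$ the bound $\sup h\lesssim\max\{1,h(\sqrt N)\}$ still follows.) In short, both proofs are correct; yours is more robust to the choice of $c$, while the paper's exploits that $c$ is at our disposal to make $b$ monotone from the outset, shortening the argument.
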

\begin{proof}
When $p$ and $\al$ satisfy \eqref{p11}, the right hand side of  \eqref{demA} is $\approx N^{1/2}$, so the
assertion follows from the inclusion $L^{2}\hookrightarrow L^p(\log L)^\al$.
On the other hand, if $p$ and $\al$ satisfy \eqref{p22}, then by Lemma \ref{L_p22} 
we have
\Be
\|f\|_{L^p(\log L)^\al}\,\lesssim\,
b\Big(\tfrac{\|f\|_\infty}{\|f\|_2}\Big)\,\|f\|_2,
\label{bbt}
\Ee
where $b(t)=t^{1-\frac 2p}\,\big(\log(c+t^\frac 2p)\big)^{\al}$.
Applying this to $f=\sum_{n\in A}\e_n e^{inx}$, and using that $b(t)$ is increasing and
\[
{\|f\|_\infty}/{\|f\|_2}\,\leq\, N/\sqrt{N}=\sqrt N, 
\] 
 one easily obtains \eqref{demA}.
\end{proof}
\BR\label{R5.11}
The upper bounds in \eqref{demA} cannot be improved, even when all signs $\e_n=1$. Indeed, 
if one considers the Dirichlet kernel $D_N(x)=\sum_{|n|\leq N} e^{inx}$, then we have
\Be
\label{DN}
\|D_N\|_{L^p(\log L)^\al}\,\approx\, N^{1-\frac1p}\,\big(\log(e+N)\big)^\al;
\Ee
see e.g. \cite[Lemma 3.1]{PaWo22}. On the other hand, if $A$ is a lacunary set (say, $A=\{2^j\}_{j=1}^N$),
then 
\[
\big\|\sum_{n\in A} e^{inx}\big\|_{L^p(\log L)^\al}\,\approx\, \sqrt N.
\]
Indeed, this is easily obtained from a similar result for all the $L^q$ spaces, $0<q<\infty$, and the inclusions
$L^{p+\e}\hookrightarrow L^p(\log L)^\al\hookrightarrow L^{p-\e}$.
\ER

\begin{corollary}\label{C_Htrig}
Let  $1<p<\infty$ and $\al\in\SR$. Let $\SX=L^p(\log L)^\al$ and $\cD=\{e^{inx}\}_{n\in\SZ}$ in $\ST$.
Then, property $\At(H)$ holds with
\[
H(N)\,\approx\, \max\Big\{N^{1/2}, \;N^{\frac1p}\,\big(\log(e+N)\big)^{-\al}\Big\}.
\]
\end{corollary}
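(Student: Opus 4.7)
The plan is to apply Lemma \ref{L_A3H} in order to reduce the verification of property $\At(H)$ to an estimate for the upper democracy function of the dual dictionary. For the trigonometric system $\cD=\{e^{inx}\}_{n\in\SZ}\subset L^\Phi(\ST)$, the biorthogonal dual functionals, under the canonical isomorphism $(L^\Phi)^*\cong L^\Psi$, are represented by the functions $\phi_n^*=\tfrac{1}{2\pi}e^{-inx}$, as one checks by the relation $\tfrac{1}{2\pi}\int_\ST e^{-inx}e^{imx}\,dx=\delta_{nm}$. Moreover, by \eqref{LPsi} we have the equivalence of norms $L^\Psi\approx L^{p'}(\log L)^{-\al}$, so that Lemma \ref{L_A3H} yields
\[
H(N)\,\approx\,\sup_{|A|\le N,\,|\e_j|=1}\,\Big\|\sum_{j\in A}\e_j e^{-ijx}\Big\|_{L^{p'}(\log L)^{-\al}}.
\]

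Next, I would apply Lemma \ref{L_demT} in the dual space, i.e.\ with exponents $p'$ in place of $p$ and $-\al$ in place of $\al$. (The change of sign in the exponentials is irrelevant because the map $x\mapsto -x$ preserves the Orlicz norm.) This is legitimate since Lemma \ref{L_demT} is valid for the full range $1<q<\infty$ and all real parameters, as its proof covers both of the complementary regimes \eqref{p11} and \eqref{p22}. Using $1-1/p'=1/p$, the bound \eqref{demA} then gives
\[
\Big\|\sum_{j\in A}\e_j e^{-ijx}\Big\|_{L^{p'}(\log L)^{-\al}}\;\lesssim\;\max\Big\{N^{1/2},\;N^{1/p}\,\big(\log(e+N)\big)^{-\al}\Big\},
\]
which is the claimed upper bound on $H(N)$, and therefore establishes property $\At(H)$ for the stated $H$.

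For the matching lower bound (giving $\approx$ in place of $\lesssim$), I would test the supremum defining $H(N)$ on two extremal configurations. First, the choice $A=\{1,\ldots,N\}$ with $\e_j=1$ produces a modulated Dirichlet kernel, whose norm in $L^{p'}(\log L)^{-\al}$ is of order $N^{1-1/p'}(\log(e+N))^{-\al}=N^{1/p}(\log(e+N))^{-\al}$ by (the dual analog of) \eqref{DN}. Second, choosing a lacunary set $A=\{2^k\}_{k=1}^N$ produces, via the embeddings $L^{p'+\e}\hookrightarrow L^{p'}(\log L)^{-\al}\hookrightarrow L^{p'-\e}$ combined with the classical lacunary bound in $L^q$, a norm of order $N^{1/2}$. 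Taking the maximum gives the desired lower bound.

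The main obstacle is essentially bookkeeping: one must carefully track the duality $(L^\Phi)^*\cong L^\Psi$ with the correct normalization of the biorthogonal functionals, and ensure that the implicit constants absorbed from the $1/(2\pi)$ factor, from the equivalence $L^\Psi\approx L^{p'}(\log L)^{-\al}$, and from the equivalence $\bar\Phi\approx\Phi$ used in the moduli computations, do not interact badly with the supremum structure. Once duality is set up correctly, the result is a direct application of Lemma \ref{L_demT} in the dual space.
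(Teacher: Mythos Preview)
Your proposal is correct and follows essentially the same route as the paper: apply Lemma~\ref{L_A3H} together with the duality $\SX^*=L^{p'}(\log L)^{-\al}$, then invoke Lemma~\ref{L_demT} with parameters $(p',-\al)$. You are in fact more explicit than the paper's one-line proof, since you also justify the lower bound in the $\approx$ by testing on the Dirichlet kernel and on lacunary sets (exactly the content of Remark~\ref{R5.11}).
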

\begin{proof}
Apply Lemmas \ref{L_A3H} and \ref{L_demT}, and the duality relation $\SX^*=L^{p'}(\log L)^{-\al}$.
\end{proof}

\subsection{Property $\Au$ for $\cT$ in $L^p(\log L)^\al$}

Given a finite set $A\subset\SZ$, we denote 
\[
S_A(g)=\sum_{n\in A}\hg(n)e^{inx},
\]
where $\hg(n)$, $n\in\SZ$, are the Fourier coeffients of $g\in L^1(\ST)$. As noticed in \cite[Lemma 2.15]{DGHKT21}, 
property $\Au(k_N)$ holds trivially when we let
\Be
k_N=\sup_{|A|\leq N}\|S_A\|_{L^p(\log L)^\al\to L^p(\log L)^\al}.
\label{kNSA}
\Ee
In this section we compute this last expression.

\begin{lemma}\label{L_kN}
Let $2<p<\infty$ and $\al\in\SR$, or $p=2$ and $\al\geq0$.
Then, 
\Be
\sup_{|A|\leq N}\big\|S_A\big\|_{L^p(\log L)^\al\to L^p(\log L)^\al}\,\lesssim \;N^{\frac12-\frac1p}\,\big(\log(e+N)\big)^\al.
\label{kNA}
\Ee
\end{lemma}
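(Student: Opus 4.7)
The plan is to apply the interpolation inequality of Lemma~\ref{L_p22} to the function $f=S_A g$, combined with the obvious $L^2$ and $L^\infty$ bounds for $S_A$ and the embedding $L^p(\log L)^\al\hookrightarrow L^2$ valid under~\eqref{p22}.

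More precisely, I would fix any $g\in L^p(\log L)^\al$, let $f=S_A g$, and assemble three standard ingredients. By Plancherel (orthogonal projection) one has $\|f\|_2\le\|g\|_2$, and by Cauchy--Schwarz together with Plancherel one has
$$
\|f\|_\infty\ \le\ \sum_{n\in A}|\widehat{g}(n)|\ \le\ |A|^{1/2}\|f\|_2\ \le\ \sqrt{N}\,\|f\|_2.
$$
Hence the ratio $r:=\|f\|_\infty/\|f\|_2$ lies in $[c_0,\sqrt{N}]$ for some absolute $c_0>0$. Applying Lemma~\ref{L_p22} and rearranging exactly as in~\eqref{bbt} yields
$$
\|f\|_{L^p(\log L)^\al}\ \le\ b(r)\,\|f\|_2,\qquad b(t):=t^{1-2/p}\bigl(\log(c+t^{2/p})\bigr)^\al,
$$
while the embedding~\eqref{LplogL2} gives $\|g\|_2\lesssim\|g\|_{L^p(\log L)^\al}$.

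It then suffices to establish $b(r)\lesssim b(\sqrt{N})\approx N^{1/2-1/p}\bigl(\log(e+N)\bigr)^\al$ uniformly for $r\in[c_0,\sqrt{N}]$. When $\al\ge 0$, both factors defining $b$ are non-decreasing, so $b(r)\le b(\sqrt{N})$ trivially. The main obstacle is the case $\al<0$ (which forces $p>2$), where $b$ is a product of an increasing and a decreasing factor and is not globally monotone. A direct computation of the logarithmic derivative gives
$$
\frac{b'(t)}{b(t)}\ =\ \frac{1}{t}\Big[(1-2/p)+O\bigl(|\al|/\log t\bigr)\Big],
$$
which shows that $b$ is strictly increasing on $[t_0,\infty)$ for some threshold $t_0=t_0(p,\al)$. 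On the compact set $[c_0,t_0]$ the function $b$ is bounded by a constant $C_1$. Moreover a short calculus exercise shows that $N\mapsto N^{1/2-1/p}(\log(e+N))^\al$ attains a strictly positive minimum for $N\ge 2$, so $b(\sqrt{N})\ge c(p,\al)>0$. Combining these facts gives $b(r)\le (C_1/c(p,\al))\,b(\sqrt{N})$ uniformly in $r$ and $N$, which is exactly the required comparison.

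Putting the pieces together yields $\|S_A g\|_{L^p(\log L)^\al}\lesssim N^{1/2-1/p}(\log(e+N))^\al\,\|g\|_{L^p(\log L)^\al}$, and taking the supremum over $|A|\le N$ establishes the lemma. The hardest step is the monotonicity analysis in the case $\al<0$; everything else reduces to known facts about Lemma~\ref{L_p22} and the embedding of Orlicz spaces into $L^2$.
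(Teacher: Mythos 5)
Your argument follows the same route as the paper: apply Lemma~\ref{L_p22} to $f=S_Ag$, bound $\|S_Ag\|_\infty/\|S_Ag\|_2$ by $\sqrt N$, conclude $\|S_Ag\|_{L^\Phi}\lesssim b(\sqrt N)\|S_Ag\|_2$, and finish with the embedding $L^p(\log L)^\al\hookrightarrow L^2$. The only divergence is your treatment of monotonicity of $b(t)=t^{1-2/p}(\log(c+t^{2/p}))^\al$: you worry that for $\al<0$ the function is not globally increasing, and patch this with an eventual-monotonicity-plus-compactness argument. The paper sidesteps this entirely by using that $c$ in the Young function \eqref{Phipa} is taken large (a standing convention); a computation of $b'/b$ shows that once $\log c\ge 2|\al|/(p-2)$ the function $b$ is increasing on all of $(0,\infty)$, which is exactly the ``increasing'' assertion quoted in the proof of Lemma~\ref{L_p22}. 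So your extra case analysis is correct but unnecessary under the paper's conventions; otherwise the two proofs coincide.
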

\begin{proof}
Let $g\in L^p(\log L)^\al$.
Using the inequality in \eqref{bbt} 
from the previous section, applied to $f=S_A(g)$ we see that
\[
\|S_A(g)\|_{L^p(\log L)^\al}\,\lesssim\,
b\Big(\tfrac{\|S_A(g)\|_\infty}{\|S_A(g)\|_2}\Big)\,\|S_A(g)\|_2.
\]
Now, 
\[
\|S_A(g)\|_\infty\leq |A|^{1/2}\,\big(\sum_{n\in A}|\hg(n)|^2\big)^{1/2}\leq \sqrt N\,\|S_A(g)\|_2,
\]
so using that $b(t)$ is increasing we obtain
\[
\|S_A(g)\|_{L^p(\log L)^\al}\,\lesssim\,
b\big(\sqrt N\big)\,\|S_A(g)\|_2.
\]
On the other hand, the inclusion in \eqref{LplogL2} gives
\[
\|S_A(g)\|_2\leq \|g\|_2\lesssim \|g\|_{L^p(\log L)^\al}.
\]
Thus, we obtain
\[
\|S_A\|\lesssim b(\sqrt N) \approx \,N^{\frac12-\frac1p}\,\big(\log(e+N)\big)^\al.
\]
\end{proof}

Since $S_A^*=S_A$, by duality one obtains the following complementary result.
\begin{lemma}\label{L_kN2}
Let $1<p<2$ and $\al\in\SR$, or $p=2$ and $\al\leq0$. 
Then, 
\Be
\sup_{|A|\leq N}\|S_A\|_{L^p(\log L)^\al\to L^p(\log L)^\al}\lesssim\;N^{\frac1p-\frac12}\,\big(\log(e+N)\big)^{-\al}.
\label{kNA2}
\Ee
\end{lemma}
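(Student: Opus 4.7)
The plan is to deduce Lemma \ref{L_kN2} as a direct dual consequence of Lemma \ref{L_kN}. The two key ingredients are the self-adjointness of the Fourier projection $S_A$ with respect to the natural pairing $\langle f,g\rangle=\int_\ST f\,\bar g$, and the standard Orlicz-space duality $(L^\Phi)^*\simeq L^\Psi$ recalled in \S\ref{S_Lpal}.

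First, I would verify that $S_A^*=S_A$: Parseval applied on trigonometric polynomials gives
\[
\langle S_A f,g\rangle \,=\, \sum_{n\in A}\hat f(n)\,\overline{\hat g(n)} \,=\, \langle f,S_A g\rangle,
\]
and this symmetry extends to the dual pairing $L^\Phi\times L^\Psi$ by density (in the parameter ranges considered, both $\Phi$ and $\Psi$ satisfy the $\Delta_2$ condition, so trigonometric polynomials are norm-dense). Consequently, up to the equivalence constants from the Luxemburg--Orlicz duality,
\[
\|S_A\|_{L^p(\log L)^\al\to L^p(\log L)^\al}\,\approx\,\|S_A\|_{L^{p'}(\log L)^{-\al}\to L^{p'}(\log L)^{-\al}}.
\]

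Next, I would check that the dual pair of indices $(p',-\al)$ falls in the range covered by Lemma \ref{L_kN}, which requires $p>2$ with $\al\in\SR$, or $p=2$ with $\al\geq 0$. Indeed, if $1<p<2$ and $\al\in\SR$, then $p'>2$ and $-\al\in\SR$; and if $p=2$ and $\al\leq 0$, then $p'=2$ and $-\al\geq 0$. In either case Lemma \ref{L_kN} yields
\[
\|S_A\|_{L^{p'}(\log L)^{-\al}\to L^{p'}(\log L)^{-\al}}\,\lesssim\,N^{\frac12-\frac1{p'}}\big(\log(e+N)\big)^{-\al},
\]
and the elementary identity $\frac12-\frac1{p'}=\frac1p-\frac12$ rewrites the right-hand side as exactly the bound asserted in \eqref{kNA2}.

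The only potentially delicate point is checking that the equivalence between the Luxemburg and Orlicz norms on the dual space transfers operator norms with only universal multiplicative constants, which is compatible with the $\lesssim$ in the conclusion. This is standard under the $\Delta_2$ conditions holding here, and no new analytic estimate beyond Lemma \ref{L_kN} is required.
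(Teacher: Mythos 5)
Your proposal is exactly the paper's argument: the paper gives Lemma \ref{L_kN2} in one line, citing $S_A^*=S_A$ and duality from Lemma \ref{L_kN}, which is precisely what you do (with helpful extra detail on the index bookkeeping and the Orlicz duality). Correct and same approach.
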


\BR
The estimate in \eqref{kNA2} is best possible (and by duality, also \eqref{kNA}). One can prove this
by noticing that there exist choices of signs $\pm1$ such that
\Be
\label{pm}
\big\|\sum_{|n|\leq N}\pm e^{inx}\big\|_{L^p(\log L)^\al}\gtrsim \sqrt N.
\Ee
This last assertion can be easily obtained from a similar property of the $L^q$-spaces, and the inclusions
at the end of Remark \ref{R5.11}. From \eqref{pm}, there will be a set $A\subset[-N,N]$, either corresponding
to the positive or the negative signs,  so that
\[
\big\|\sum_{n\in A}e^{inx}\big\|_{L^p(\log L)^\al}\gtrsim \tfrac12\,\sqrt N.
\]
Thus, omitting the subindices $L^p(\log L)^\al$ from the norms, we have
\[
\|S_A\|\geq \|S_A(D_N)\|/\|D_N\|= \big\|\sum_{n\in A}e^{inx}\big\|/\|D_N\|\gtrsim N^{\frac1p-\frac12}\,\big(\log(e+N)\big)^{-\al},
\]
using \eqref{DN} in the last step.
\ER

\begin{corollary}\label{C_kNtrig}
Let  $1<p<\infty$ and $\al\in\SR$. Let $\SX=L^p(\log L)^\al$ and $\cD=\{e^{inx}\}_{n\in\SZ}$ in $\ST$.
Then, property $\Au(k_N)$ holds with
\[
k_N\,\approx\, \begin{cases}
N^{\frac12-\frac1p}\,\big(\log(e+N)\big)^\al & \mbox{if $p>2$, or $p=2$ and $\al\geq0$}\\
N^{\frac1p-\frac12}\,\big(\log(e+N)\big)^{-\al} & \mbox{if $1<p<2$, or $p=2$ and $\al\leq0$}.\\
\end{cases}
\]
\end{corollary}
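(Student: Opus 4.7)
The plan is to combine the reduction noted in \eqref{kNSA} with the operator-norm estimates for the partial-sum projectors $S_A$ already established in Lemmas \ref{L_kN} and \ref{L_kN2}. Recall from \cite[Lemma 2.15]{DGHKT21} that property $\Au(k_N)$ holds whenever we choose
\[
k_N\,=\,\sup_{|A|\leq N}\|S_A\|_{L^p(\log L)^\al\to L^p(\log L)^\al},
\]
so the upper bound $k_N\lesssim \max\{N^{\frac12-\frac1p}(\log(e+N))^\al,\,N^{\frac1p-\frac12}(\log(e+N))^{-\al}\}$ follows directly by splitting into the two regimes and invoking Lemma \ref{L_kN} in the first case ($p>2$, or $p=2$ and $\al\geq0$) and Lemma \ref{L_kN2} in the second ($1<p<2$, or $p=2$ and $\al\leq0$).

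The matching lower bound (implicit in the symbol $\approx$) is obtained from the argument sketched in the remark following Lemma \ref{L_kN2}. First take $B=[-N,N]\cap\SZ$ and $a_n=1$ for $n\in B$, so that $\sum_{n\in B}a_n e^{inx}=D_N$ has Orlicz norm $\|D_N\|_{L^p(\log L)^\al}\approx N^{1-1/p}(\log(e+N))^\al$ by \eqref{DN}. A standard Rademacher-type argument produces a choice of signs $\pm 1$ for which
\[
\big\|\Ts\sum_{|n|\leq N}\pm e^{inx}\big\|_{L^p(\log L)^\al}\,\gtrsim\,\sqrt{N},
\]
so at least one of the two sign classes gives a subset $A\subset B$ (of cardinality $\leq N+1$, and we may shrink it if needed) with $\|\sum_{n\in A}e^{inx}\|\gtrsim \sqrt{N}$. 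The definition of $\Au(k_N)$ then forces
\[
k_N\,\geq\,\frac{\|\sum_{n\in A}e^{inx}\|}{\|D_N\|}\,\gtrsim\,\frac{\sqrt{N}}{N^{1-1/p}(\log(e+N))^\al}\,=\,N^{\frac1p-\frac12}(\log(e+N))^{-\al},
\]
which is the sharp lower bound in the regime $1<p\leq 2$.

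For the remaining regime $p\geq 2$, the lower bound is obtained by duality: since $S_A^*=S_A$ under the duality pairing $\lan f g=\int f\bar g$, the operator norm of $S_A$ on $L^p(\log L)^\al$ coincides (up to equivalence of norms for the dual Orlicz pairing) with its norm on $(L^p(\log L)^\al)^*=L^{p'}(\log L)^{-\al}$. Applying the previous paragraph to the pair $(p',-\al)$, which lies in the range $1<p'\leq 2$, produces the lower bound $N^{1/p'-1/2}(\log(e+N))^\al=N^{1/2-1/p}(\log(e+N))^\al$, matching the upper bound from Lemma \ref{L_kN}. The only non-routine piece is the sign-selection estimate and the verification that the sharpness bound derived for $S_A$ transfers to $k_N$ itself, which is immediate once a concrete $(A,B,\{a_j\})$ realizing the ratio is produced as above.
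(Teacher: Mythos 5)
Your proposal follows the paper's own route: the upper bound comes from identifying $k_N$ with $\sup_{|A|\leq N}\|S_A\|$ as in \eqref{kNSA} and invoking Lemmas \ref{L_kN} and \ref{L_kN2} in the two parameter regimes, exactly as the one-line proof of Corollary \ref{C_kNtrig} does. The lower bound you supply to justify the two-sided $\approx$ is the same Dirichlet-kernel/random-signs argument that the paper records in the Remark immediately preceding the corollary (using \eqref{DN} and \eqref{pm}), together with the duality $S_A^*=S_A$ to pass to $p\geq 2$ from $(p',-\al)$; so although the paper's displayed proof of the corollary addresses only the upper bound, you have correctly located and reproduced where the sharpness half lives.

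One small imprecision: when you split $B=[-N,N]$ into sign classes, the class with the larger norm need not have cardinality $\leq N+1$ (it could be as large as $2N+1$), and simply ``shrinking it'' does not obviously preserve $\|\sum_{n\in A}e^{inx}\|\gtrsim\sqrt{N}$. This is easily repaired: either start from $B=[-M,M]$ with $2M+1\leq N$, or observe that the argument gives $k_{2N+1}\gtrsim N^{\frac1p-\frac12}(\log(e+N))^{-\al}$, which together with the monotonicity of $k_N$ and the slow variation of the right-hand side yields the stated bound for $k_N$ up to constants. Neither adjustment changes the substance of the argument.
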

\begin{proof}
Apply Lemmas \ref{L_kN} and \ref{L_kN2} to the expression in \eqref{kNSA}.
\end{proof}

\subsection{WCGA for $\cT$ in $L^p(\log L)^\al$}

Combining the estimates from the previous subsections, we obtain the following.

\begin{theorem}\label{th3}
Let  $1<p<\infty$ and $\al\in\SR$. Let $\SX=L^p(\log L)^\al$ and $\cD=\{e^{inx}\}_{n\in\SZ}$ in $\ST$.
Then, there exists a constant $C>1$ such that the WCGA satisfies
\[
\Big\|f-\G_{\oldphi(N)}(f)\Big\|_{L^p(\Log L)^\al}\leq \,2\,\sigma_N(f) 
,\quad \forall\,f\in L^p(\log L)^\al,\;N\geq2,
\]
where 
\Be
\label{phitrig}
\oldphi(N)\,=\,C\,\begin{cases}
N\,\log N & \mbox{when $p>2$}\\
N\,\log\log N & \mbox{when $p=2$ and $\al>0$}\\
N & \mbox{when $p=2$ and $\al=0$}\\
N\,(\log N)^{4\al_-}\,\log\log N & \mbox{when $p=2$ and $\al<0$}\\
N^{p'-1}\,\frac{(\log N)^{p'\al_-}}{(\log N)^{p'\al}}\,\log N &   \mbox{when $1<p<2$}.\\
\end{cases}
\Ee
\end{theorem}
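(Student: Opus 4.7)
The plan is to apply Theorem \ref{th_newG} directly, reading off the three required properties from earlier results. First I would obtain $\D(Q)$ from Corollary \ref{C_Q}, after observing that the trigonometric system in $L^p(\log L)^\al$ on $\ST$ is normalized (up to $2\pi$-factors) and that the moduli estimates of Proposition \ref{P_rhodt} hold verbatim on $\ST$, since the results of \cite{MT75} apply identically in this setting. Property $\Au(k_N)$ with the $k_N$ of Corollary \ref{C_kNtrig} and property $\At(H)$ with the $H(N)$ of Corollary \ref{C_Htrig} are then available. Choosing any $\la_1>1$ (say $\la_1=2$) we obtain the constant $c(\tau)$ of \eqref{Gn}, and the constant in the logarithmic factor of \eqref{phiHG} depends only on $\tau$ and $\la_1$.

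The bulk of the argument is then arithmetic: computing $G(n)=[Q(c(\tau)/H(n))]^{-1}$ in each of the five regimes and combining with $\log k_N$ to produce \eqref{phitrig}. For $p>2$ the function $H(n)\approx n^{1/2}$ (since $n^{1/2}$ dominates $n^{1/p}(\log n)^{-\al}$), so $G(n)\approx n$ and $\log k_N\approx \log N$, giving $N\log N$. For $p=2$ both terms in the definition of $H$ have the same power of $N$, and the splitting according to the sign of $\al$ comes from which of them wins: $\al>0$ gives $H(n)\approx n^{1/2}$, $k_N\approx(\log N)^\al$, hence $N\log\log N$; $\al=0$ is classical $L^2$ with $k_N=O(1)$; and $\al<0$ gives $H(n)\approx n^{1/2}(\log n)^{\al_-}$ together with the nontrivial $Q(s)\approx s^2(\log(e+1/s))^{-2\al_-}$, producing the factor $(\log N)^{4\al_-}$ plus $\log\log N$ from $\log k_N$. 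For $1<p<2$ we have $H(n)\approx n^{1/p}(\log n)^{-\al}$; using $p'/p=p'-1$ and the explicit form of $Q(s)$ gives
\[
G(n)\,\approx\,H(n)^{p'}\,\big(\log H(n)\big)^{p'\al_-}\,\approx\,n^{p'-1}(\log n)^{p'(\al_--\al)},
\]
and combining with $\log k_N\approx \log N$ yields the last line of \eqref{phitrig}.

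Before citing Theorem \ref{th_newG}, I would also verify the 1-quasi-convexity of $G(n)$ required in \eqref{Gn}. In every case above $G(n)$ has the form $n^s(\log(c+n))^\beta$ with either $s>1$, or $s=1$ and $\beta\geq 0$, so quasi-convexity is a direct consequence of the example \eqref{Gqc} (choosing $c$ large enough in the $p=2$, $\al<0$ case where $\beta=4\al_->0$). With all hypotheses of Theorem \ref{th_newG} verified, the statement \eqref{xNG} applied to $\Phi$ the best $N$-term approximation of $f$ gives
\[
\|f-\G_{\oldphi(N)}(f)\|\,\leq\,\la_1\,\sigma_N(f)=2\,\sigma_N(f),
\]
with $\oldphi(N)$ bounded (up to the universal $\ln[8(1+\la_1)k_N/(\sqrt{\la_1}-1)]$ factor) by $G(2N)$, which matches \eqref{phitrig}.

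The main obstacle will be purely bookkeeping: tracking how the two competing terms in $H(N)$ interact with the two regimes of $Q(s)$ (one smooth at $p>2$, one with a logarithmic loss at $p\leq 2$), and correctly identifying in the $p=2$, $\al<0$ border case that the logarithmic factors from $Q$ and from $H$ multiply rather than cancel, producing $(\log N)^{4\al_-}$ rather than $(\log N)^{2\al_-}$. No step of the argument requires any new ideas beyond the case analysis and the general result.
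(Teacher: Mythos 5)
Your proposal is correct and follows exactly the route the paper itself uses: plug the estimates of Corollaries \ref{C_Q}, \ref{C_Htrig}, and \ref{C_kNtrig} into Theorem \ref{th_newG}, verify quasi-convexity via \eqref{Gqc}, and carry out the five-case arithmetic for $G(2N)\cdot\log k_N$. The paper's own proof is a one-line ``combine'' statement leaving this bookkeeping implicit, and your computations fill it in accurately, including the delicate $p=2$, $\al<0$ case where the logarithms from $Q$ and $H$ compound to give the exponent $4\al_-$.
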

\begin{proof}
Combine Theorem \ref{th_newG}, with the estimates for $Q(t)$, $H(N)$ and $k_N$ in 
Corollaries \ref{C_Q}, \ref{C_Htrig} and \ref{C_kNtrig}.
\end{proof}
\BR
The necessity of the log factors and the powers in
the above expression of $\oldphi(N)$ is not known, even in the case $\al=0$ (except, of course,
if $\SX=L^2$). See \cite[Open Question 8.2]{Tem18}.
\ER

\section*{ Acknowledgments }{ 
Research partially supported by grants {\small MTM2017-83262-C2-2-P,
PID2019-105599GB-I00} from \emph{Ministerio de Ciencia e Innovaci\'on} (Spain), and grant 20906/PI/18 from 
\emph{Fundaci\'on S\'eneca} (Regi\'on de Murcia, Spain).

The author wishes to thank E. Hern\'andez and D. Kutzarova for useful comments at different stages of this work.}

\bibliographystyle{plain}

\end{document}